\documentclass[11pt,a4paper]{article}

\usepackage[T1]{fontenc}
\usepackage{newtxtext}
\usepackage{mathtools,amsthm}
\usepackage{newtxmath}
\usepackage{microtype}
\usepackage{mleftright}\mleftright

\usepackage[margin=2.7cm]{geometry}

\usepackage{enumitem}
\setlist[enumerate]{itemsep=2pt,topsep=4pt}

\usepackage{xcolor}
\definecolor{linkblue}{RGB}{35,65,110}

\usepackage[
    colorlinks=true,
    linkcolor=linkblue,
    citecolor=black,
    urlcolor=linkblue
]{hyperref}
\hypersetup{
    pdftitle={Dual medial axes in finite-dimensional normed spaces},
    pdfauthor={Arnaud Durand, Xiangyu Liang, Sicheng Zhang}
}

\allowdisplaybreaks
\numberwithin{equation}{section}

\makeatletter
\renewenvironment{proof}[1][\proofname]{\par
    \pushQED{\qed}%
    \normalfont
    \topsep6\p@\@plus6\p@\relax
    \trivlist
    \item[\hskip\labelsep\bfseries #1\@addpunct{.}]\ignorespaces
}{%
    \popQED\endtrivlist\@endpefalse
}
\makeatother

\newtheorem{theorem}{Theorem}[section]
\newtheorem{proposition}[theorem]{Proposition}
\newtheorem{lemma}[theorem]{Lemma}
\newtheorem{corollary}[theorem]{Corollary}

\theoremstyle{definition}
\newtheorem{definition}{Definition}[section]

\theoremstyle{remark}
\newtheorem{question}{Question}
\newtheorem*{remark}{Remark}

\newcommand{\R}{\mathbb{R}}

\newcommand{\Sp}{\mathbb{S}}

\newcommand{\cM}{\mathcal{M}}

\newcommand{\aff}{\operatorname{aff}}

\newcommand{\spann}{\operatorname{span}}
\newcommand{\supp}{\operatorname{supp}}
\newcommand{\conv}{\operatorname{conv}}
\newcommand{\interior}{\operatorname{int}}

\DeclarePairedDelimiter{\norm}{\lVert}{\rVert}
\DeclarePairedDelimiter{\enorm}{\lvert}{\rvert}
\DeclarePairedDelimiterX{\dualnorm}[1]{\lVert}{\rVert_{*}}{#1}

\newcommand{\doi}[1]{\href{https://doi.org/#1}{doi:#1}}

\title{Dual medial axes in finite-dimensional normed spaces}

\author{
    Arnaud Durand\thanks{Université Paris-Saclay, CNRS, Laboratoire de mathématiques d'Orsay, 91405, Orsay, France\\ \href{mailto:arnaud.durand@universite-paris-saclay.fr}{\texttt{arnaud.durand@universite-paris-saclay.fr}}}
    \and
    Xiangyu Liang\thanks{School of Mathematical Sciences, Beihang University, Beijing 102206, China\\ \href{mailto:maizeliang@gmail.com}{\texttt{maizeliang@gmail.com}}}
    \and
    Sicheng Zhang\thanks{School of Mathematical Sciences, Peking University, Beijing 100871, China\\ \href{mailto:zhangsicheng113@gmail.com}{\texttt{zhangsicheng113@gmail.com}}}
}

\date{}

\begin{document}

\maketitle

\begin{abstract}
    We study medial axes of nonempty closed sets in finite-dimensional real normed spaces whose norms are $C^1$ away from the origin. Using the differential of the norm, we define the dual $k$-medial axis by requiring the contact covectors associated with nearest points to have affine dimension at least $k-1$. In dimension $n$, we prove that this set is $(n-k+1)$-rectifiable for every $1\le k\le n+1$, and hence has Hausdorff dimension at most $n-k+1$. We give a direct geometric proof and characterize the dual medial axes through the superdifferential of the squared distance function, obtaining an alternative proof from a theorem of Alberti, Ambrosio, and Cannarsa on singularities of semiconvex functions. Under the additional assumption of strict convexity, the ordinary and dual medial axes coincide for $k=2,3$, yielding the corresponding rectifiability results for ordinary medial axes. For $k=4$, however, the ordinary rectifiability conclusion can fail even for norms arbitrarily close to Euclidean: we construct norms on $\R^3$ with smooth positively curved unit spheres arbitrarily $C^\infty$-close to the Euclidean sphere, together with four-point sets whose ordinary $4$-medial axes contain nondegenerate line segments.
\end{abstract}

\medskip
\noindent\textbf{2020 Mathematics Subject Classification.}
Primary 28A75; Secondary 46B20, 52A21.

\medskip
\noindent\textbf{Key words and phrases.}
Dual medial axis, higher-order medial axis, rectifiability, finite-dimensional normed space, duality map, nearest-point set.

\section{Introduction}\label{sec:intro}

The medial axis was introduced by Blum~\cite{Blum1967} in 1967 as a biologically motivated tool for shape representation and recognition. His grassfire transform describes the medial axis through the meeting of inward-propagating fronts. Medial axes and medial representations have since found applications in shape analysis, image processing, computer vision, and geometric modeling~\cite{SiddiqiPizer2008}. Their topology has also been studied extensively. Lieutier~\cite{Lieutier2004} proved that every bounded open subset of $\R^n$ has the same homotopy type as its medial axis, defined by the existence of at least two nearest boundary points. In the planar setting, Choi et al.~\cite{Choi1997} established a finite graph structure under suitable piecewise real-analytic boundary assumptions, using the maximal-inscribed-disk convention.

We are concerned with the size and rectifiability of higher-order medial axes. Following Erd\H{o}s~\cite{Erdos1945} and Liang~\cite{Liang2022}, let $E\subseteq\R^n$ be nonempty and closed. For $1\le k\le n+1$, we define its $k$-medial axis $M_k(E)$ to be the set of points outside $E$ whose nearest-point set contains $k$ affinely independent points. Equivalently, the affine dimension of the nearest-point set is at least $k-1$. In particular, $M_1(E)=\R^n\setminus E$, while $M_2(E)$ consists of the points having more than one nearest point in $E$.

Erd\H{o}s~\cite{Erdos1945,Erdos1946} studied the size of these sets, proving in particular that $M_2(E)$ has Lebesgue measure zero and that $M_{n+1}(E)$ is countable. More generally, for $2\le k\le n$, he showed that $M_k(E)$ is contained in a countable union of surfaces of finite $(n-k+1)$-dimensional measure, and hence that
\[
    \dim M_k(E)\le n-k+1,
\]
where $\dim$ denotes Hausdorff dimension. His argument uses a geometric criterion formulated in terms of contingents. More recently, Liang~\cite{Liang2022} established the following explicit rectifiability statement, which provides the starting point for our investigation.

\begin{theorem}[Liang~\cite{Liang2022}]\label{thm:liang}
For every nonempty closed set $E$ of the Euclidean space $\R^n$ and every integer $1\le k\le n+1$, the $k$-medial axis $M_k(E)$ of $E$ is $(n-k+1)$-rectifiable. In particular,
\[
    \dim M_k(E)\le n-k+1.
\]
\end{theorem}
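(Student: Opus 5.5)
The plan is to reduce the statement to the Alberti--Ambrosio--Cannarsa theorem on singular sets of semiconcave functions. Set
\[
    u(x)=\tfrac12\,d_E(x)^2,
\]
where $d_E$ is the Euclidean distance to $E$. Then $\tfrac12\enorm{x}^2-u(x)=\sup_{y\in E}\bigl(\langle x,y\rangle-\tfrac12\enorm{y}^2\bigr)$ is a supremum of affine functions, hence convex. So $u$ is semiconcave on all of $\R^n$ with constant $1$. The cases $k=1$ (the open set $\R^n\setminus E$ is trivially $n$-rectifiable) and $k=n+1$ (countability, which will come out of the same argument) need no separate treatment beyond this.

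The key step is to identify the superdifferential. For $x\notin E$ with nearest-point set $P(x)$, I will show that
\[
    \partial^+u(x)=\conv\{x-y : y\in P(x)\}.
\]
The inclusion $\supseteq$ is elementary. Each $y\in P(x)$ gives the touching paraboloid $u(z)\le\tfrac12\enorm{z-y}^2$, with equality at $z=x$, so $x-y$ is a supergradient, and the superdifferential is convex. For $\subseteq$, I use the standard fact that for semiconcave functions $\partial^+u(x)$ is the convex hull of reachable gradients $\lim\nabla u(x_j)$ with $x_j\to x$. At differentiability points $x_j$ the nearest point $y_j$ is unique and $\nabla u(x_j)=x_j-y_j$. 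By compactness of $E\cap\overline{B}(x,d_E(x)+1)$ and continuity of $d_E$, any limit of the $y_j$ lies in $P(x)$. This gives the reverse inclusion.

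Consequently,
\[
    \dim\partial^+u(x)=\dim\aff P(x),
\]
since the map $y\mapsto x-y$ is an affine isometry. Therefore $M_k(E)=\{x\notin E:\dim\partial^+u(x)\ge k-1\}$. The theorem of Alberti, Ambrosio and Cannarsa states that for a semiconcave function on an open subset of $\R^n$, the set $\Sigma^j=\{\dim\partial^+u\ge j\}$ is countably $(n-j)$-rectifiable. Applying it with $j=k-1$ on the open set $\R^n\setminus E$ yields the $(n-k+1)$-rectifiability of $M_k(E)$. The bound $\dim M_k(E)\le n-k+1$ follows because countably $m$-rectifiable sets have $\sigma$-finite $\mathcal H^m$ measure.

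The main obstacle is the $\subseteq$ inclusion in the superdifferential formula. Concretely, it requires the representation of $\partial^+u$ through reachable gradients together with the stability of nearest points under limits. If one prefers to avoid that representation, an alternative is to argue directly. Given $p\in\partial^+u(x)$ outside $\conv(x-P(x))$, separate $p$ from this compact convex set by a direction $v$. Then use the one-sided directional derivative formula
\[
    u'(x;v)=\min_{y\in P(x)}\langle x-y,v\rangle,
\]
which holds by a Danskin-type argument, to contradict the supergradient inequality $u'(x;v)\le\langle p,v\rangle$.
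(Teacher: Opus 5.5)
Your proposal is correct. It is essentially the paper's alternative proof in Section~\ref{sec:semiconcavity}, specialized to the Euclidean norm, where $J$ is the restriction of the Riesz isomorphism and so $M_k^J(E)=M_k(E)$: Proposition~\ref{prop:gEconv} identifies $D^+g_E(x)$ with $\conv\{DF(x-y):y\in\Phi_E(x)\}$ (your $\conv\{x-y\}$) by the Danskin-plus-separation argument you give as a fallback, and then Theorem~\ref{thm:AAC} is applied with $m=k-1$. The one detail to add is that this form of the Alberti--Ambrosio--Cannarsa theorem needs a bounded convex domain on which $u$ is Lipschitz, so apply it on a countable cover of $\R^n\setminus E$ by balls whose closures lie in $\R^n\setminus E$ and take the union, as the paper does.
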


The goal of this paper is to determine how far this conclusion remains valid in finite-dimensional normed spaces. A direct extension to arbitrary norms fails. Although bi-Lipschitz equivalence of norms preserves the rectifiability of a fixed set, it does not preserve nearest points. For example, equip $\R^2$ with the infinity norm $\norm{\cdot}_\infty$ and let $E=\{(-1,0),(1,0)\}$. If $\lvert y\rvert>1+\lvert x\rvert$, then both points of $E$ are nearest to $(x,y)$, since their distance from $(x,y)$ is $\lvert y\rvert$. Thus
\[
    \{(x,y)\in\R^2:\lvert y\rvert>1+\lvert x\rvert\}\subseteq M_2(E),
\]
so $M_2(E)$ has nonempty interior and is not $1$-rectifiable. This example shows that extending Theorem~\ref{thm:liang} beyond the Euclidean setting requires additional assumptions on the norm or a modification of the notion of higher-order medial axis.

Nearest-point geometry beyond the Euclidean setting has been studied from several perspectives. Convex-distance Voronoi diagrams provide a computational framework for finite sets of sites~\cite{ChewDrysdale1985}, while bisectors, circumspheres, and related proximity structures are central topics in Minkowski geometry~\cite{Icking1995,Martini2004}. For arbitrary closed sets, Kolasiński and Santilli~\cite{Kolasinski2023} studied anisotropic distance functions and cut loci under the assumption that the norm is $C^2$ away from the origin and uniformly convex in the sense of their Definition~2.8. Here we study the first-order geometry of equidistance conditions under $C^1$ regularity and its role in the rectifiability of higher-order medial axes.

Let $X$ be an $n$-dimensional real normed space, with $n\ge1$, whose norm $\norm{\cdot}$ is $C^1$ on $X\setminus\{0\}$, and let $D$ denote the Fréchet derivative. The dual space and dual norm are denoted by $X^*$ and $\dualnorm{\cdot}$. The unit spheres are
\[
    \Sp_X=\{u\in X:\norm{u}=1\}\quad\text{and}\quad\Sp_{X^*}=\{f\in X^*:\dualnorm{f}=1\}.
\]
Define the duality map between the unit spheres by
\[
    J:\Sp_X\longrightarrow\Sp_{X^*},
    \qquad
    J(u)=D\norm{\cdot}(u).
\]
For each $u\in\Sp_X$, the covector $J(u)$ is the unique norming functional of $u$; this and the basic identities satisfied by $J$ are established in Lemma~\ref{lem:duality-identities}.

Let $E\subseteq X$ be nonempty and closed, and let $d_E$ denote the distance to $E$, defined for $x\in X$ by
\[
    d_E(x)=\inf_{y\in E}\norm{x-y}.
\]
If $x\notin E$ and $y\in E$ is a nearest point to $x$, we call $u=(y-x)/d_E(x)$ a contact vector and $J(u)$ the corresponding contact covector. At a fixed center $x$, the contact vectors are obtained from the nearest points by a translation and a nonzero scaling, so the two sets have the same affine dimension. Thus $x\in M_k(E)$ precisely when its contact vectors have affine dimension at least $k-1$. To understand the local geometry of medial axes, we first linearize the equidistance conditions for fixed contact points. The expansion
\[
    \norm{y-(x+h)}=d_E(x)-J(u)(h)+o(\norm{h})
\]
shows that the contact covector determines the first-order change in the distance to $y$. For nearest points $y_1,\ldots,y_q$ with contact vectors $u_1,\ldots,u_q$, preserving equidistance to first order therefore imposes the conditions $(J(u_i)-J(u_1))(h)=0$ for $i=2,\ldots,q$. The intersection of these kernels has codimension equal to the affine dimension of $\{J(u_1),\ldots,J(u_q)\}$, which thus counts the independent first-order equidistance constraints on the center.

This motivates the definition of the dual $k$-medial axis $M_k^J(E)$ as the set of points outside $E$ whose contact covectors have affine dimension at least $k-1$. The dual medial axes thus classify nearest-point configurations by the rank of their linearized equidistance conditions. This rank is intrinsic to the norm and the nearest-point configuration. It also admits a characterization through the distance function itself. Indeed, Proposition~\ref{prop:gEconv} shows that $M_k^J(E)$ is precisely the set of points outside $E$ where the superdifferential of $d_E^2/2$ has dimension at least $k-1$.

For norms induced by an inner product, the first-order equidistance conditions characterize exact equidistance to the fixed contact points: in the differences of squared distances, the quadratic terms cancel, leaving precisely the linearized equations. This agrees with the identification of contact vectors and contact covectors through the Riesz isomorphism, whose restriction to the unit sphere is $J$. Since this isomorphism preserves affine dimension, the ordinary and dual definitions coincide: $M_k^J(E)=M_k(E)$. For general $C^1$ norms, the affine dimensions of the contact vectors and contact covectors may differ, and so may the corresponding medial axes.

At a point of $M_k^J(E)$, the linearized equidistance conditions leave a space of displacements of dimension at most $n-k+1$, suggesting the dimension in an analogue of Theorem~\ref{thm:liang}. However, turning this calculation for fixed contact points into a rectifiability theorem requires accounting for the variation of the nearest points with the center. Our first result establishes this rectifiability conclusion for the dual medial axes. We then examine under what additional assumptions they coincide with the ordinary higher-order medial axes.

\begin{theorem}\label{thm:intro-dual}
    Let $X$ be an $n$-dimensional real normed vector space whose norm is $C^1$ on $X\setminus\{0\}$, and let $E\subseteq X$ be nonempty and closed. For every integer $1\le k\le n+1$, the dual $k$-medial axis $M_k^J(E)$ is $(n-k+1)$-rectifiable. In particular, its Hausdorff dimension satisfies
    \[
        \dim M_k^J(E)\le n-k+1.
    \]
\end{theorem}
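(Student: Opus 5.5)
The plan is a direct geometric argument in the spirit of Liang's Euclidean proof. We cover $M_k^J(E)$ by countably many pieces, and show that each piece is contained in a Lipschitz graph over an $(n-k+1)$-dimensional subspace. Throughout, $\norm{\cdot}$ is $C^1$ and the sphere $\Sp_X$ is compact, so $J$ is uniformly continuous on $\Sp_X$. Since the inequality $\norm{z-w}\ge J(u)(z-w)$ holds for every $u\in\Sp_X$, we get a one-sided estimate. Namely, if $y$ is a nearest point of $x$ with contact vector $u$, then for every $x'$
\[
    d_E(x')\le \norm{y-x'} = d_E(x)-J(u)(x'-x)+o(\norm{x'-x}).
\]
Here the $o(\cdot)$ is uniform in $u\in\Sp_X$ and uniform for $d_E(x)$ bounded below. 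This follows from the uniform $C^1$ expansion of the norm near the compact sphere. This is the semiconcavity-type inequality that will drive everything.

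\textbf{Step 1: decomposition.} For $x\in M_k^J(E)$, choose nearest points $y_1,\dots,y_k$ whose contact covectors $f_i=J(u_i)$ are affinely independent. We quantify this independence. Fix a rational $\delta>0$ and a rational lower bound $r\le d_E(x)$. Choose, from a countable dense family, rational covectors $g_1,\dots,g_k$ that are affinely independent with a quantitative margin, and require $\dualnorm{f_i-g_i}<\delta$. Let $V$ be the common kernel of $g_i-g_1$ for $i=2,\dots,k$; it has dimension $n-k+1$. Fix a complement $W$ of $V$ and a rational radius $\rho$. The piece $A$ consists of the points $x$ in a ball of radius $\rho$ with data as above. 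Countably many such pieces cover $M_k^J(E)$.

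\textbf{Step 2: the cone condition.} Take $x,x'\in A$ with $\norm{x-x'}$ small. Let $f_i$ be the contact covectors at $x$ and $f_i'$ those at $x'$, with $\dualnorm{f_i-g_i},\dualnorm{f'_i-g_i}<\delta$. Set $h=x'-x$. The inequality at $x$ for each $i$, together with the inequality at $x'$ in the reverse direction applied with $f_j'$, gives two bounds:
\[
    d_E(x')-d_E(x)\le -f_i(h)+o(\norm h),\qquad d_E(x)-d_E(x')\le f'_j(h)+o(\norm h).
\]
Adding them yields $(f'_j-f_i)(h)\ge -o(\norm h)$ for all $i,j$. Replacing each covector by $g$ up to an error of $2\delta$ gives $(g_j-g_i)(h)\ge -(2\delta+o(1))\norm h$ for all pairs $i,j$. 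Swapping $i$ and $j$ then gives $\enorm{(g_j-g_i)(h)}\le (2\delta+o(1))\norm h$.

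By the quantitative independence, the map $h\mapsto((g_i-g_1)(h))_{i\ge2}$ restricted to $W$ is an isomorphism, with a lower bound $c$ that depends only on the piece. Hence the $W$-component of $h$ satisfies $\norm{h_W}\le C(\delta+o(1))\norm h$. Shrinking $\rho$ makes the $o(1)$ term small, using the uniformity in $r$. If $\delta$ is small compared with $c$, the projection onto $V$ restricted to $A$ is injective with Lipschitz inverse. So $A$ lies in a Lipschitz graph over $V$, hence $A$ is $(n-k+1)$-rectifiable. The Hausdorff dimension bound follows.

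\textbf{Main obstacle.} The delicate point is making the constants compatible. The margin of independence of the $g_i$ must be chosen before $\delta$, so that $\delta\ll c$. The $o(\norm h)$ term must be uniform over all nearest points, which needs compactness and a lower bound on $d_E$, since the expansion degenerates as $d_E\to0$. I would handle the first issue by indexing the pieces by tuples $(g_i)$ with explicit rational $c$, and then taking $\delta=c/(10C)$. Alternatively, one could invoke the superdifferential characterization of Proposition~\ref{prop:gEconv} and the Alberti--Ambrosio--Cannarsa theorem, since $d_E^2/2$ is locally semiconcave away from $E$. But that route requires checking semiconcavity under mere $C^1$ regularity of the norm (only a modulus, not linear), so I prefer the direct argument.
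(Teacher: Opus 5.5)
Your proposal is correct and is essentially the paper's direct proof in Section~3. Adding the two nearest-point inequalities at $x$ and $x'$ is exactly the mechanism of Lemma~\ref{lem:cone-exclusion}. Your countable family of rational covector tuples, with $\delta$ tied to the independence margin, plays the role of the paper's Lindel\"of cover of $\mathcal A_k^J\times(0,\infty)$; the paper instead approximates contact vectors and uses uniform continuity of $J$, and expands $F=\norm{\cdot}^2/2$ rather than the norm, but these differences are cosmetic.
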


We give a direct geometric proof of Theorem~\ref{thm:intro-dual}. Working with contact vectors and contact covectors, we show how affine independence of the latter yields a quantitative cone-exclusion condition and hence containment in local Lipschitz graphs. The proof identifies the subspaces over which these graphs are defined and controls the aperture of the excluded cones, making explicit the role of the contact covectors in the local geometry underlying rectifiability.

The rectifiability conclusion can also be deduced from the theorem of Alberti, Ambrosio, and Cannarsa~\cite{Alberti1992} on the rectifiability of subdifferential strata of semiconvex functions. Indeed, $d_E^2/2$ is locally semiconcave on $X$ with a general modulus, so the superdifferential characterization of $M_k^J(E)$ described above allows us to apply their theorem after changing sign. We give the details in Section~\ref{sec:semiconcavity}.

We next compare the ordinary and dual medial axes. Under the additional assumption of strict convexity, the two notions coincide for $k=2,3$. Consequently, the conclusion of Theorem~\ref{thm:liang} remains valid for the ordinary $2$- and $3$-medial axes in this class of norms.

\begin{theorem}\label{thm:intro-low-rank}
    Let $n\ge 2$, and let $X$ be an $n$-dimensional real normed vector space whose norm is strictly convex and $C^1$ on $X\setminus\{0\}$. Let $E\subseteq X$ be nonempty and closed. Then
    \[
        M_2(E)=M_2^J(E)
        \qquad\text{and}\qquad
        M_3(E)=M_3^J(E).
    \]
    Hence $M_2(E)$ is $(n-1)$-rectifiable and $M_3(E)$ is $(n-2)$-rectifiable. In particular, if $n=2$, then $M_2(E)$ is $1$-rectifiable and $M_3(E)$ is countable.
\end{theorem}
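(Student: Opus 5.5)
We need to show that for $k=2,3$ the affine dimension of the contact vectors is at least $k-1$ exactly when that of the contact covectors is. Once this equality of sets is in hand, the rectifiability statements follow from Theorem~\ref{thm:intro-dual}. For $n=2$, the set $M_3(E)$ is $0$-rectifiable and hence countable.

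Fix $x\notin E$ and let $U\subseteq\Sp_X$ be its set of contact vectors. The case $k=2$ asks only that $U$ has at least two points iff $J(U)$ does. Strict convexity together with $C^1$ smoothness makes $J$ injective on $\Sp_X$. Indeed, if $J(u)=J(v)=f$, then $f(u)=f(v)=1=\dualnorm{f}$, so the segment $[u,v]$ lies in $\{f=1\}\cap$ the unit ball. Hence $\norm{(u+v)/2}=1$, and strict convexity forces $u=v$. Thus $\lvert U\rvert\ge2$ iff $\lvert J(U)\rvert\ge2$, which gives $M_2(E)=M_2^J(E)$.

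For $k=3$ we must show that $U$ contains three affinely independent points iff $J(U)$ does. Since $J$ is injective, it suffices to prove that three distinct points $u_1,u_2,u_3\in\Sp_X$ are collinear iff $J(u_1),J(u_2),J(u_3)$ are collinear. Note that three distinct points need not be affinely independent, so we pass to subsets: any set with at least three points and affine dimension at most $1$ lies on a line.

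\emph{Vectors to covectors.} If three distinct points of $\Sp_X$ lie on a line $\ell$, then $\ell$ meets the strictly convex sphere in at most two points. So distinct contact vectors are never collinear, and $U$ has affine dimension at least $2$ as soon as $\lvert U\rvert\ge3$. The same holds on the dual side for $J(U)\subseteq\Sp_{X^*}$, provided $\Sp_{X^*}$ contains no segments. This is where $C^1$ smoothness enters: a segment $[f,g]\subseteq\Sp_{X^*}$ would consist of functionals normed at a common point $u$, which is the unique norming point of their midpoint by compactness. Then $J(u)$ would not be unique, contradicting the fact from Lemma~\ref{lem:duality-identities} that $J(u)$ is the unique norming functional. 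Hence $\Sp_{X^*}$ is strictly convex, and three distinct covectors on it are never collinear.

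Combining the two facts: $x\in M_3(E)$ iff $\lvert U\rvert\ge3$ iff $\lvert J(U)\rvert\ge3$ (by injectivity) iff $x\in M_3^J(E)$.

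The main obstacle I expect is the duality step, namely showing that smoothness of $\norm{\cdot}$ yields strict convexity of $\Sp_{X^*}$. I would handle it carefully via uniqueness of norming functionals: take $h=(f+g)/2$ with $\dualnorm{h}=1$, choose $u\in\Sp_X$ with $h(u)=1$ by compactness, and deduce $f(u)=g(u)=1$, so $f=g=J(u)$. Everything else is elementary.
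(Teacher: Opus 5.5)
Your proposal is correct and follows the paper's proof essentially step for step: injectivity of $J$ from strict convexity gives $M_2(E)=M_2^J(E)$, and strict convexity of both $\Sp_X$ and $\Sp_{X^*}$ reduces $k=3$ to counting distinct contact vectors and covectors; for the dual sphere you use uniqueness of norming functionals, exactly as in Lemma~\ref{lem:dual-strictly-convex}. One small wording slip: compactness gives only \emph{existence} of a norming point $u$ for $(f+g)/2$, not uniqueness, but uniqueness is never needed, as your final paragraph correctly shows.
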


For four contact vectors, however, affine independence need no longer be preserved by $J$, and the rectifiability conclusion for ordinary medial axes can fail. The infinity norm example above exhibits failure for a nonsmooth norm with flat faces, but neither feature is necessary: the following result shows that failure can occur even for smooth strictly convex norms with positively curved unit spheres arbitrarily $C^\infty$-close to the Euclidean sphere $\Sp^2$ in $\R^3$. We adopt the convention that the Euclidean unit sphere has principal curvatures $+1$.

\begin{theorem}\label{thm:intro-smoothexample}
    There exist $\eta_0>0$ and a family of norms $(\norm{\cdot}_\eta)_{|\eta|<\eta_0}$ on $\R^3$, with unit balls denoted by $B_\eta$, such that the following properties hold:
    \begin{enumerate}[label=\textup{(\roman*)}]
        \item $\norm{\cdot}_0$ is the Euclidean norm;
        \item $\norm{\cdot}_\eta$ is strictly convex and $C^\infty$ on $\R^3\setminus\{0\}$, and $\partial B_\eta$ has positive Euclidean principal curvatures;
        \item if $\rho_\eta:\Sp^2\to(0,\infty)$ denotes the radial function of $B_\eta$, then $\rho_\eta\to1$ in $C^\infty(\Sp^2)$ as $\eta\to0$;
        \item if $\eta\ne0$, then there exists a four-point set $E_\eta\subseteq\R^3$ such that the $4$-medial axis $M_4(E_\eta)$, computed with respect to $\norm{\cdot}_\eta$, contains a nondegenerate line segment; hence $M_4(E_\eta)$ is not $0$-rectifiable.
    \end{enumerate}
\end{theorem}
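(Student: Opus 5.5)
We construct the norms by a small smooth perturbation of the Euclidean ball and force four affinely independent nearest points along a segment. First, note what a segment in $M_4(E)$ requires. Let $E=\{y_1,\dots,y_4\}$ be affinely independent and let $x(t)=x_0+tv$, $|t|<\varepsilon$, be a segment with
\[
\norm{y_i-x(t)}_\eta=r(t)\quad\text{for all } i,
\]
and with the four points nearest in $E$, which is automatic since $E$ consists of these four points. Then every point of the segment lies in $M_4(E)$. In the Euclidean case this is impossible, because the circumcenter is unique. So we must produce a norm for which four points lie on a one-parameter family of translated, rescaled unit spheres $x(t)+r(t)\partial B_\eta$.

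We reverse the construction. Choose the segment and the spheres first, then place the four points on the intersection. Take $x(t)=(0,0,t)$ and $r(t)=1$, so the spheres are the translates $\partial B_\eta+te_3$. A point $y$ lies on all of them iff $y-te_3\in\partial B_\eta$ for all small $t$, which forces a vertical segment in $\partial B_\eta$ and contradicts strict convexity. So the radius must vary. Use instead a homothety family: require $y_i\in x(t)+r(t)\partial B_\eta$, equivalently
\[
\frac{y_i-x(t)}{r(t)}\in\partial B_\eta .
\]
For each $i$ this traces a curve $\gamma_i(t)=(y_i-x(t))/r(t)$. With $x(t)=te_3$ and $r(t)=1+\lambda t$, the curve $\gamma_i$ lies in the plane through $0$ spanned by $y_i$ and $e_3$. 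We therefore need $\partial B_\eta$ to contain four short arcs $\gamma_i$, one through each of four affinely independent points. Each $\gamma_i$ is a segment of a straight line, since $\gamma_i(t)$ is a Möbius-type reparametrization of a point on the line through $y_i/1$ in direction $\lambda y_i+e_3$. Lines are again forbidden by strict convexity. We avoid this by letting $x(t)$ and $r(t)$ be general smooth curves, or by allowing $E$ to lie on curved arcs. The cleaner route is to keep $x$ linear but choose $r(t)$ nonlinear, so that each $\gamma_i$ is a curved arc.

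Concretely, we will construct the norm directly. Prescribe four small disjoint caps around directions $u_1,\dots,u_4$, chosen so that $\pm$ no three are coplanar with $0$ and so that the four tips are affinely independent, for example near a regular tetrahedron's vertices. On each cap, define $\partial B_\eta$ as a smooth positively curved surface containing the arc $\gamma_i(t)=(y_i-te_3)/r(t)$, where $r(t)=1+\eta t^2$ or similar. Each arc is then an $O(\eta)$ perturbation of a Euclidean great-circle arc, and $r$ is common to all four caps. The radial function is $\rho_\eta=1+\eta\phi$, with $\phi$ smooth and chosen by a local graph interpolation, using a bump-function cutoff, so that each arc lies on the surface. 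Then (i) and (iii) hold by construction, with $C^\infty$ closeness since all derivatives are $O(\eta)$. Property (ii) follows because positive curvature is open in $C^2$, and a positively curved smooth closed surface bounds a strictly convex body whose gauge is $C^\infty$ off $0$. The point $x(t)$ lies outside $E$, and all four points are equidistant with distance $r(t)$, which gives (iv).

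The main obstacle is compatibility. The curve $\gamma_i$ must be a curve on the sphere, and it must be realisable by $\rho_\eta$ with $\|\rho_\eta-1\|_{C^m}=O(\eta)$. This requires the radial profile along $\gamma_i$ to be $1+O(\eta)$ with all derivatives small. Since $\gamma_i(t)=(y_i-te_3)/r(t)$, its Euclidean norm is $|y_i-te_3|/r(t)$. Setting $|y_i|=1$, we want
\[
|y_i-te_3|/r(t)=1+O(\eta)
\]
uniformly in $C^\infty$ on $|t|<\varepsilon$. This fails for fixed $\varepsilon$ unless $r(t)\approx|y_i-te_3|$ for all four $i$ simultaneously. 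The first thing we would try is to take $\varepsilon=\eta$, or more precisely $\varepsilon$ small, with $t$ ranging over an $\eta$-dependent but nondegenerate interval. For instance, choose $y_i$ on the equator $z=0$, so $|y_i-te_3|=\sqrt{1+t^2}$ is the same for all $i$, and take $r(t)=\sqrt{1+t^2}$. Then the $\gamma_i$ lie exactly on the Euclidean sphere, which is the degenerate Euclidean case. But four equator points are coplanar, so this fails affine independence. We fix it by tilting: put $y_4$ off the equator and perturb $r$ only by $O(\eta)$. The defects $|y_i-te_3|/r(t)-1$ are then $O(\eta)$ after restricting $|t|\lesssim\sqrt\eta$-scale, or after redesigning $x(t)$ as a curve chosen to make the four defects equal and $O(\eta)$. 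The defects are absorbed by $\eta\phi$, and we check that the higher derivatives stay controlled.
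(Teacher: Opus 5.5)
Your overall strategy (four fixed points, a center moving along a line, and a radial function $\rho_\eta=1+\eta\phi$ prescribed along the four contact curves) is viable. As written, however, the proposal never produces a configuration satisfying (iii) and (iv) simultaneously, and the concrete choice you commit to fails.

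With $x(t)=te_3$ and $r(t)=1+\eta t^2$, the curves $\gamma_i(t)=(y_i-te_3)/r(t)$ are not $O(\eta)$-perturbations of great-circle arcs. At $\eta=0$ they are the vertical segments $y_i-te_3$, of Euclidean length $\sqrt{1-2t\,y_{i,3}+t^2}$. More generally, for a common center $te_3$ and common radius $r(t)$, the radial values you must prescribe along the $i$th contact curve are $\sqrt{1-2t\,y_{i,3}+t^2}/r(t)$. These can all be $1+O(\eta)$ in $C^1$ only if the heights $y_{i,3}$ agree up to $O(\eta)$. Points near the vertices of a regular tetrahedron are therefore excluded.

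This is the missing idea: as $\eta\to0$, the four points must degenerate to a concyclic configuration, for which the Euclidean equidistant locus is a whole line, and their departure from coplanarity must be of order $\eta$. Your last paragraph reaches the concyclic configuration, but it leaves the size of the tilt unspecified and offers restricting to $|t|\lesssim\sqrt\eta$ as a way out. That fails. With a tilt angle $\theta$ independent of $\eta$, the profile to be prescribed along the fourth arc is $\sqrt{1-2t\sin\theta/(1+t^2)}$, whose $t$-derivative is of order $\sin\theta$. So $\|\rho_\eta-1\|_{C^1}$ does not tend to $0$, however short the interval. You also need $\rho_\eta$ to be even, so each localized perturbation must be duplicated at the antipodal arc, with disjoint supports.

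The argument closes with the following choices:
\begin{itemize}
\item take $y_1,y_2,y_3$ distinct on the equator, and $y_4=(\cos\theta\,w_4,\sin\theta)$ with $\sin\theta=\eta$ and $w_4\ne\pm y_1,\pm y_2,\pm y_3$;
\item keep $x(t)=te_3$ and $r(t)=\sqrt{1+t^2}$ on a fixed $t$-interval;
\item set $\rho_\eta=1$ near the meridian arcs through $\pm y_1,\pm y_2,\pm y_3$;
\item prescribe $\rho_\eta=\sqrt{1-2t\eta/(1+t^2)}$ in the directions of $\pm(y_4-te_3)$, and extend off the meridian through $\pm w_4$ by an even cutoff in longitude and latitude.
\end{itemize}
The prescribed values are $1+O(\eta)$ in $C^\infty$, so the curvature-stability argument of Proposition~\ref{prop:unit-ball} gives (ii) and (iii). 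All four points lie at $\|\cdot\|_\eta$-distance $\sqrt{1+t^2}$ from $te_3$, and they are affinely independent.

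This would be a valid route genuinely different from the paper's. The paper uses the single explicit perturbation $\eta\chi(r)y^2/r$ of the lower graph. Its positive $1$-homogeneity on the annulus where $\chi=1$ makes the configurations $\{\alpha_t,\beta_t,\gamma_t,\delta_t\}$ exactly homothetic. No prescription along curves or extension step is then needed, and the contact covectors can be computed in closed form.
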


The construction uses the homothetic-tetrahedron mechanism described by Icking, Klein, L\^e, and Ma~\cite[Section~5]{Icking1995}. Here we realize this mechanism on smooth positively curved unit spheres arbitrarily $C^\infty$-close to the Euclidean sphere and compute the resulting loss of affine dimension of the contact covectors. Theorem~\ref{thm:intro-smoothexample} implies that the direct analogue of Theorem~\ref{thm:liang} for ordinary medial axes fails already for $(n,k)=(3,4)$, even under these regularity and curvature assumptions. Along the line segment in our construction, the four contact vectors are affinely independent, whereas their contact covectors have affine dimension exactly $2$. We verify this in Section~\ref{sec:smoothexamples}. The construction therefore exhibits the loss of affine dimension that separates ordinary and dual higher-order medial axes.

The rest of the article is organized as follows. Section~\ref{sec:preliminaries} introduces the notation and preliminary results. Section~\ref{sec:dual-theorem} gives the direct geometric proof of Theorem~\ref{thm:intro-dual}. Section~\ref{sec:semiconcavity} establishes the superdifferential characterization of the dual medial axes and derives an alternative proof from the theorem of Alberti, Ambrosio, and Cannarsa. Section~\ref{sec:ordinary} proves Theorem~\ref{thm:intro-low-rank}. Section~\ref{sec:smoothexamples} constructs the smooth three-dimensional example of Theorem~\ref{thm:intro-smoothexample} in which the affine dimension decreases under the duality map, yielding a failure of rectifiability for the ordinary $4$-medial axis, and explains its extension to higher dimensions and higher orders. Finally, Section~\ref{sec:questions} discusses several open problems.

\paragraph{Acknowledgements.}
This work is supported by the National Natural Science Foundation of China (Grant No.~12671108 and Grant No.~12271018) and Beijing Natural Science Foundation Youth Science Fund Key Project (Grant No.~JQ26002). The first author thanks Beihang University for its hospitality during his visit, when part of this work was carried out.

\section{Preliminaries}\label{sec:preliminaries}

Throughout the following sections, $X$ is an $n$-dimensional real normed space ($n\ge1$) whose norm $\norm{\cdot}$ is $C^1$ on $X\setminus\{0\}$. We retain the notation of Section~\ref{sec:intro}: the dual space and norm $X^*$ and $\dualnorm{\cdot}$, the unit spheres $\Sp_X$ and $\Sp_{X^*}$, the duality map $J$, and the distance function $d_E$ for nonempty closed $E\subseteq X$.

\subsection{Auxiliary Euclidean norm, dimensions, and rectifiability}

We fix an auxiliary Euclidean norm $\enorm{\cdot}$ on $X$, and we write $\enorm{\cdot}_*$ for the corresponding dual norm on $X^*$. By equivalence of norms in finite dimensions, there exists $C_X\ge1$ such that
\begin{equation}\label{Lip_constant}
    C_X^{-1}\enorm{\cdot}\le\norm{\cdot}\le C_X\enorm{\cdot}.
\end{equation}
We write $\mathcal H^m$ for the $m$-dimensional Hausdorff measure induced by $\enorm{\cdot}$ and retain $\dim$ for Hausdorff dimension, with the convention $\dim\varnothing=0$. We use the following convention for rectifiability.

\begin{definition}
    Let $m$ be an integer with $0\le m\le n$. A set $A\subseteq X$ is called \emph{$m$-rectifiable} if there exist Lipschitz maps $f_i:\R^m\to X$, $i\ge1$, such that
    \[
        \mathcal H^m\left(A\setminus\bigcup_{i=1}^{\infty}f_i(\R^m)\right)=0.
    \]
\end{definition}

The empty set is $m$-rectifiable for every $0\le m\le n$. In particular, $0$-rectifiability is equivalent to countability, including the empty set. The equivalence~\eqref{Lip_constant} implies that the two norms induce the same class of Lipschitz maps and the same Hausdorff-measure null sets in each dimension. Consequently, rectifiability is independent of the auxiliary Euclidean norm.

Let $A\subseteq X$ be nonempty and choose any $a\in A$. Its affine hull is $\aff A=a+\spann(A-a)$, and its affine dimension is the algebraic dimension of $\spann(A-a)$. Since Hausdorff dimension agrees with algebraic dimension for linear and affine subspaces, we have $\dim\aff A=\dim\spann(A-a)$. Both the affine hull and the affine dimension are independent of the choice of $a$.

The following elementary criterion allows us to deduce rectifiability from cone avoidance.

\begin{lemma}\label{lem:cone-criterion}
    Let $Q,V\subseteq X$ be complementary linear subspaces, let $\pi_Q:X\to Q$ denote the projection onto $Q$ along $V$, and let $A\subseteq X$. Suppose that, for some $\alpha>0$,
    \[
        (A-A)\cap\{q+v\ne0:q\in Q,\ v\in V,\ \enorm{q}<\alpha\enorm{v}\}=\varnothing.
    \]
    Then $\pi_Q|_A$ is injective, and there exists an $\alpha^{-1}$-Lipschitz map $f:Q\to V$ such that
    \[
        A=\{q+f(q):q\in\pi_Q(A)\}.
    \]
    In particular, $A$ is contained in a Lipschitz graph over $Q$ and is $\dim Q$-rectifiable.
\end{lemma}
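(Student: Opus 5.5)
The plan is to decompose each point of $A$ along $Q\oplus V$ and read the hypothesis as a quantitative statement about differences. For $a,b\in A$ with $a\ne b$, write $a-b=q+v$ with $q=\pi_Q(a-b)\in Q$ and $v\in V$. Since $a-b\ne0$, the hypothesis excludes $\enorm{q}<\alpha\enorm{v}$. Hence
\[
    \enorm{v}\le\alpha^{-1}\enorm{q}
\]
for every pair of points of $A$. This single inequality drives the whole argument.

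The first step is injectivity. If $\pi_Q(a)=\pi_Q(b)$, then $q=0$, so $\enorm{v}\le0$ and $v=0$. This gives $a=b$, so $\pi_Q|_A$ is injective.

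The second step is to build a Lipschitz map on the image. Define $f_0:\pi_Q(A)\to V$ by $f_0(\pi_Q(a))=a-\pi_Q(a)=\pi_V(a)$, where $\pi_V=\mathrm{id}-\pi_Q$; this is well defined by injectivity. For $a,b\in A$, the difference $f_0(\pi_Q a)-f_0(\pi_Q b)$ is exactly the $V$-component $v$ of $a-b$. The displayed inequality therefore says that $f_0$ is $\alpha^{-1}$-Lipschitz from $(\pi_Q(A),\enorm{\cdot})$ to $(V,\enorm{\cdot})$. By construction every $a\in A$ equals $\pi_Q(a)+f_0(\pi_Q(a))$, so $A$ is precisely the graph of $f_0$ over $\pi_Q(A)$.

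The third step, which I expect to be the only real obstacle, is extending $f_0$ to all of $Q$ with the same constant $\alpha^{-1}$. A componentwise McShane extension only gives a constant up to $\sqrt{\dim V}$. Instead I would use Kirszbraun's theorem. Here $Q$ and $V$ carry the restriction of the Euclidean norm $\enorm{\cdot}$, so they are Hilbert spaces, and Kirszbraun applies to the map $f_0:\pi_Q(A)\subseteq Q\to V$. It yields $f:Q\to V$ that is $\alpha^{-1}$-Lipschitz and agrees with $f_0$ on $\pi_Q(A)$. Then
\[
    A=\{q+f(q):q\in\pi_Q(A)\}.
\]
If $A$ is empty, the statement is trivial with $f=0$.

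For the final rectifiability claim, choose a linear isometry $\iota:\R^m\to Q$ with $m=\dim Q$. The map $F(t)=\iota(t)+f(\iota(t))$ is Lipschitz from $\R^m$ to $X$, with constant at most $1+\alpha^{-1}$. Since $A\subseteq F(\R^m)$, a single map in the definition of rectifiability suffices, and $A$ is $\dim Q$-rectifiable.
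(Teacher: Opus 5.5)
Your proof is correct and follows the paper's argument. The cone condition gives $\enorm{\pi_Q(a-b)}\ge\alpha\enorm{\pi_V(a-b)}$, which yields injectivity and the $\alpha^{-1}$-Lipschitz bound for $f_0=\pi_V\circ(\pi_Q|_A)^{-1}$; this map is then extended by Kirszbraun's theorem and composed with a linear identification of $Q$ with $\R^{\dim Q}$.
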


\begin{proof}
    Let $\pi_V:X\to V$ denote the projection onto $V$ along $Q$. For $x,y\in A$, the assumption gives
    \[
        \enorm{\pi_Q(y-x)}\ge\alpha\enorm{\pi_V(y-x)}.
    \]
    Thus $\pi_Q(x)=\pi_Q(y)$ implies $x=y$, and the map $f_0=\pi_V\circ(\pi_Q|_A)^{-1}:\pi_Q(A)\to V$ is well defined and $\alpha^{-1}$-Lipschitz. By Kirszbraun's extension theorem~\cite{AzagraLeGruyerMudarra2021}, it extends to an $\alpha^{-1}$-Lipschitz map $f:Q\to V$. For $A=\varnothing$, one may simply take $f=0$. By construction, $A$ is the set of points of the form $q+f(q)$ with $q\in\pi_Q(A)$. The map $q\mapsto q+f(q)$ is Lipschitz on $Q$ and its image thus contains $A$. Identifying $Q$ linearly with $\R^{\dim Q}$ proves the rectifiability assertion.
\end{proof}

\subsection{Basic properties of the duality map}

We next establish basic properties of $J$ used in the proofs. Since the norm is $C^1$ on $X\setminus\{0\}$, its Fréchet derivative $D\norm{\cdot}(u)\in X^*$ is well defined at every $u\ne0$. By definition,
\begin{equation}\label{eq:Frechet}
    \lim_{h\to0}\frac{\norm{u+h}-\norm{u}-D\norm{\cdot}(u)(h)}{\norm{h}}=0.
\end{equation}
We defined $J$ as the restriction of the map $D\norm{\cdot}\colon X\setminus\{0\}\to X^*$ to $\Sp_X$. The $C^1$ regularity of the norm implies that $J$ is continuous on $\Sp_X$. The following lemma justifies viewing $J$ as a map $\Sp_X\to\Sp_{X^*}$.

\begin{lemma}\label{lem:duality-identities}
    For every $u\in\Sp_X$ and $r>0$, it holds that
    \[
        J(u)(u)=1,
        \qquad
        \dualnorm{J(u)}=1,
        \qquad
        J(-u)=-J(u),
        \qquad
        D\norm{\cdot}(ru)=J(u).
    \]
    Moreover, $J(u)$ is the unique norming functional of $u$, that is, the unique $f\in X^*$ such that $\dualnorm{f}=1$ and $f(u)=1$.
\end{lemma}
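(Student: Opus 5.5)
We prove the four identities first, using only the homogeneity and convexity of the norm together with the Fréchet expansion~\eqref{eq:Frechet}. Then we deduce uniqueness of the norming functional from the differentiability.

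\emph{Homogeneity.} Fix $u\in\Sp_X$ and $r>0$. Since $\norm{ru+h}=r\norm{u+h/r}$, the expansion~\eqref{eq:Frechet} at $u$ gives
\[
\norm{ru+h}=r+D\norm{\cdot}(u)(h)+o(\norm{h}).
\]
By uniqueness of the Fréchet derivative, $D\norm{\cdot}(ru)=J(u)$.

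\emph{Value at $u$.} Take $h=tu$ with $t\to0$. Then $\norm{u+tu}=1+t$ for $\enorm{t}<1$. Comparing with $1+tJ(u)(u)+o(t)$ yields $J(u)(u)=1$.

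\emph{Oddness.} The identity $\norm{-v}=\norm{v}$ holds for all $v$. Differentiating at $v=-u$ gives $D\norm{\cdot}(-u)(-h)=D\norm{\cdot}(u)(h)$, so $J(-u)=-J(u)$.

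\emph{Dual norm.} Convexity gives, for $t\in(0,1]$ and any $h$,
\[
\norm{u+th}\le(1-t)\norm{u}+t\norm{u+h}.
\]
Hence
\[
\frac{\norm{u+th}-1}{t}\le\norm{u+h}-1.
\]
Letting $t\to0^+$ gives the subgradient inequality $J(u)(h)\le\norm{u+h}-1$. Substituting $h=v-u$ and using $J(u)(u)=1$ gives $J(u)(v)\le\norm{v}$ for all $v$. Therefore $\dualnorm{J(u)}\le1$, and $J(u)(u)=1$ forces equality.

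\emph{Uniqueness.} Let $f\in X^*$ satisfy $\dualnorm{f}=1$ and $f(u)=1$. For every $h$ and $t>0$ we have
\[
1+tf(h)=f(u+th)\le\norm{u+th}=1+tJ(u)(h)+o(t).
\]
Dividing by $t$ and letting $t\to0^+$ gives $f(h)\le J(u)(h)$ for all $h$. Replacing $h$ by $-h$ gives the reverse inequality, so $f=J(u)$.

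No step is a real obstacle. The only point requiring care is the subgradient inequality from convexity, which is the one place the argument uses more than the definition of the derivative.
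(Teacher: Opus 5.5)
Your proof is correct and follows essentially the same route as the paper, including the identical uniqueness argument via $1+tf(h)\le\norm{u+th}$. The one local difference is the bound $\dualnorm{J(u)}\le1$: the paper gets $\lvert J(u)(h)\rvert\le\norm{h}$ directly from the triangle inequality $\lvert\norm{u+th}-\norm{u}\rvert\le\lvert t\rvert\norm{h}$, which is a bit shorter than your subgradient inequality and would have served equally well.
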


\begin{proof}
    Evaluating~\eqref{eq:Frechet} along lines gives, for every $u\ne0$ and $h\in X$,
    \[
        D\norm{\cdot}(u)(h)=\lim_{t\to0}\frac{\norm{u+th}-\norm{u}}{t}.
    \]
    For $u\in\Sp_X$, this yields $J(u)(u)=\norm{u}=1$ and, by the triangle inequality, $|J(u)(h)|\le\norm{h}$ for every $h\in X$. It follows that $\dualnorm{J(u)}=1$. This also directly gives $J(-u)(h)=J(u)(-h)=-J(u)(h)$ and $D\norm{\cdot}(ru)(h)=J(u)(h)$ for every $h\in X$; hence $J(-u)=-J(u)$ and $D\norm{\cdot}(ru)=J(u)$. Finally, if $f\in X^*$ satisfies $\dualnorm{f}=1$ and $f(u)=1$, then for every $h\in X$,
    \[
        1+t f(h)=f(u+th)\le\norm{u+th}=1+tJ(u)(h)+o(|t|)
        \qquad\text{as } t\to0.
    \]
    Letting $t\to0$ through positive and negative values gives $f(h)=J(u)(h)$. Thus $f=J(u)$.
\end{proof}

\subsection{Nearest-point configurations and medial axes}

We now introduce notation for the nearest-point configurations described in the introduction. Let $E\subseteq X$ be nonempty and closed. For $x\in X$, write
\[
    \Phi_E(x)=\{y\in E:\norm{x-y}=d_E(x)\}.
\]
This set is nonempty and compact. Indeed, every sequence in $E$ whose distances to $x$ converge to $d_E(x)$ is bounded and therefore has a convergent subsequence. Its limit belongs to $E$ and realizes the distance. Moreover, $\Phi_E(x)$ is closed and bounded, hence compact.

For $x\notin E$, collect the contact vectors and contact covectors into the sets
\[
    U_E(x)=\left\{\frac{y-x}{d_E(x)}:y\in\Phi_E(x)\right\}\subseteq\Sp_X
    \qquad\text{and}\qquad
    N_E(x)=J(U_E(x))\subseteq\Sp_{X^*}.
\]
Both sets are nonempty and compact, by compactness of $\Phi_E(x)$ and continuity of $J$. Moreover, since $y\mapsto(y-x)/d_E(x)$ is an invertible affine map on $X$, we have
\[
    \dim\aff\Phi_E(x)=\dim\aff U_E(x).
\]

The definitions from the introduction can now be expressed in terms of these contact sets.

\begin{definition}\label{def:medial-axes}
    For $1\le k\le n+1$, the \emph{$k$-medial axis} and the \emph{dual $k$-medial axis} of $E$ are given by
    \[
        M_k(E)=\{x\notin E:\dim\aff U_E(x)\ge k-1\}
        \quad\text{and}\quad
        M_k^J(E)=\{x\notin E:\dim\aff N_E(x)\ge k-1\},
    \]
    respectively.
\end{definition}

Finally, to make the Euclidean identification explicit, suppose that the norm is induced by an inner product and let $R:X\to X^*$ be the associated Riesz isomorphism, $R(u)=\langle u,\cdot\rangle$. Then $J=R|_{\Sp_X}$ and $N_E(x)=R(U_E(x))$. Since $R$ preserves affine dimension, $M_k^J(E)=M_k(E)$ for every $1\le k\le n+1$.

\subsection{The squared norm and uniform first-order estimates}

It will also be convenient to work with the squared norm. For every $x\in X$, define
\begin{equation}\label{eq:squared-norm}
    F(x)=\frac{\norm{x}^2}{2}.
\end{equation}
Then $F\in C^1(X)$. Indeed, $F(h)=o(\norm{h})$ as $h\to0$, so $DF(0)=0$. Moreover, the chain rule gives $DF(x)=\norm{x}D\norm{\cdot}(x)$ for $x\ne0$. Hence, by Lemma~\ref{lem:duality-identities}, for $u\in\Sp_X$ and $r>0$,
\begin{equation}\label{eq:squared-norm-property}
    DF(ru)=rJ(u)
    \qquad\text{and}\qquad
    DF(-ru)=-rJ(u).
\end{equation}
In particular, $\dualnorm{DF(x)}=\norm{x}$ for $x\ne0$, which proves continuity of $DF$ at the origin.

We shall repeatedly use uniform first-order estimates of the squared norm $F$ on compact sets.

\begin{lemma}\label{lem:uniform-expansion}
    Let $K\subseteq X$ be nonempty and compact. There exists a continuous nondecreasing function $\omega_K:[0,\infty)\to[0,\infty)$ with $\omega_K(0)=0$ such that, for every $z\in K$ and $h\in X$,
    \[
        \left|F(z+h)-F(z)-DF(z)(h)\right|\le \omega_K(\enorm{h})\enorm{h}.
    \]
\end{lemma}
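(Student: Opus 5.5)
The plan is to apply the mean value theorem along the segment from $z$ to $z+h$ and control the remainder by the modulus of continuity of $DF$ on a compact neighbourhood of $K$. Since $F\in C^1(X)$, for $z\in K$ and $h\in X$ we have
\[
    F(z+h)-F(z)-DF(z)(h)=\int_0^1\bigl(DF(z+th)-DF(z)\bigr)(h)\,dt,
\]
and hence
\[
    \left|F(z+h)-F(z)-DF(z)(h)\right|\le\enorm{h}\sup_{0\le t\le1}\enorm{DF(z+th)-DF(z)}_*.
\]
The task is therefore to bound $\sup_{s\le\enorm{h}}\sup_{z\in K}\enorm{DF(z+w)-DF(z)}_*$ over $\enorm{w}\le s$ by a continuous nondecreasing function of $s$ that vanishes at $0$.

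Local control comes from uniform continuity. Let $K_1=\{x:\operatorname{dist}_{\enorm{\cdot}}(x,K)\le1\}$, which is compact, so $DF$ is uniformly continuous there. For $0\le s\le1$, define
\[
    \omega_0(s)=\sup\{\enorm{DF(z+w)-DF(z)}_*: z\in K,\ \enorm{w}\le s\}.
\]
This is finite and nondecreasing, and $\omega_0(s)\to0$ as $s\to0$.

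Large $h$ is where the main obstacle lies, since $DF$ grows linearly and a bounded modulus cannot work. Here we use $\dualnorm{DF(x)}=\norm{x}$ together with \eqref{Lip_constant}. These give $\enorm{DF(x)}_*\le C\norm{x}$ for a constant $C$ depending on $C_X$, so for $z\in K$, $\enorm{DF(z+w)-DF(z)}_*\le C'(1+\enorm{w})$ with $C'$ depending on $K$. Extending the definition of $\omega_0$ to all $s\ge0$ thus yields a finite nondecreasing function with $\omega_0(s)\le C'(1+s)$, which is an acceptable modulus, since the statement does not require boundedness.

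It remains to make the modulus continuous. The function $\omega_0$ is nondecreasing and finite, but possibly discontinuous. We replace it by the continuous majorant $\omega_K(s)=\frac1s\int_s^{2s}\omega_0(r)\,dr$ for $s>0$, with $\omega_K(0)=0$. This $\omega_K$ is continuous on $(0,\infty)$ and satisfies $\omega_K(s)\ge\omega_0(s)$, because $\omega_0$ is nondecreasing. It also satisfies $\omega_K(s)\le\omega_0(2s)\to0$, which gives continuity at $0$. Finally, $\omega_K(s)=\int_1^2\omega_0(sr)\,dr$ shows that $\omega_K$ is nondecreasing. Combining this with the integral estimate gives the claimed inequality for all $z\in K$ and $h\in X$.
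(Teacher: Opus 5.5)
Your proof is correct. Its skeleton matches the paper's: the same integral identity $F(z+h)-F(z)-DF(z)(h)=\int_0^1\bigl(DF(z+th)-DF(z)\bigr)(h)\,dt$, and your $\omega_0(s)$ is exactly the function the paper calls $\omega_K$, namely the supremum of $\enorm{DF(z')-DF(z)}_*$ over $z\in K$ and $\enorm{z'-z}\le s$.

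The difference is in how the continuity of the modulus is obtained.

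\textbf{The paper's route.} It proves that this supremum is itself continuous.
\begin{enumerate}
\item Finiteness and attainment come from compactness of $A_t=\{(z,z')\in K\times X:\enorm{z'-z}\le t\}$. So no linear-growth bound via $\dualnorm{DF(x)}=\norm{x}$ is needed for large $h$; continuity of $DF$ on a bounded neighborhood of $K$ suffices.
\item Upper semicontinuity comes from extracting convergent maximizing pairs.
\item Lower semicontinuity comes from the radial rescaling $z_j'=z+\lambda_j(z'-z)$.
\end{enumerate}

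\textbf{Your route.} You use nothing about $\omega_0$ beyond finiteness, monotonicity, and $\omega_0(s)\to0$. You then regularize it to $\omega_K(s)=\int_1^2\omega_0(sr)\,dr$, which is automatically continuous and nondecreasing, and satisfies $\omega_0(s)\le\omega_K(s)\le\omega_0(2s)$.

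\textbf{Comparison.} Your averaging trick is more robust: it turns any finite nondecreasing modulus vanishing at $0$ into a continuous majorant, without using the structure of the constraint set. The paper's argument keeps the sharp modulus $\omega_0$ itself, at the cost of a short compactness argument. Either version serves the later applications (Lemmas~\ref{lem:cone-exclusion} and~\ref{lem:gEconcave}, and Proposition~\ref{prop:gEconv}). These only require a nondecreasing, continuous (in particular upper semicontinuous) modulus vanishing at $0$.
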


\begin{proof}
    For every $t\ge0$, define
    \[
        \omega_K(t)=\sup_{(z,z')\in A_t}\enorm{DF(z')-DF(z)}_*
        \qquad\text{where}\qquad
        A_t=\left\{(z,z')\in K\times X:\enorm{z'-z}\le t\right\}.
    \]
    The set $A_t$ is compact. Indeed, it is closed, and $z'$ remains in a bounded neighborhood of the compact set $K$. Since $DF:X\to X^*$ is continuous, the supremum defining $\omega_K(t)$ is therefore finite and attained. Clearly, $\omega_K$ is nonnegative and nondecreasing, and $\omega_K(0)=0$.
    
    We next verify that $\omega_K$ is continuous. Let $t_j\to t\ge0$. For each $j$, choose $(z_j,z_j')\in A_{t_j}$ such that $\omega_K(t_j)=\enorm{DF(z_j')-DF(z_j)}_*$. After passing to a subsequence realizing the limsup, the pairs $(z_j,z_j')$ lie in the compact set $A_{t+1}$ for all sufficiently large $j$. Passing to a further subsequence, we may assume that $z_j\to z\in K$ and $z_j'\to z'$. Then $\enorm{z'-z}\le t$, and hence, by continuity of $DF$,
    \[
        \limsup_{j\to\infty}\omega_K(t_j)=\enorm{DF(z')-DF(z)}_*\le\omega_K(t).
    \]
    Conversely, choose $(z,z')\in A_t$ attaining $\omega_K(t)$. If $z=z'$, then $\omega_K(t)=0$ and the reverse inequality is immediate. Otherwise, set $\lambda_j=\min\{1,t_j/\enorm{z'-z}\}$ and $z_j'=z+\lambda_j(z'-z)$. Then $(z,z_j')\in A_{t_j}$ and $z_j'\to z'$, so
    \[
        \liminf_{j\to\infty}\omega_K(t_j)\ge\lim_{j\to\infty}\enorm{DF(z_j')-DF(z)}_*=\omega_K(t).
    \]
    Thus $\omega_K(t_j)\to\omega_K(t)$, proving the continuity of $\omega_K$.

    Finally, let $z\in K$ and $h\in X$. By the fundamental theorem of calculus,
    \[
        F(z+h)-F(z)-DF(z)(h)=\int_0^1\bigl(DF(z+sh)-DF(z)\bigr)(h)\,ds.
    \]
    For every $s\in[0,1]$, we have $\enorm{z+sh-z}\le\enorm{h}$, so the definition of $\omega_K$ gives
    \[
        \left|F(z+h)-F(z)-DF(z)(h)\right|\le\int_0^1\enorm{DF(z+sh)-DF(z)}_*\,\enorm{h}\,ds\le\omega_K(\enorm{h})\enorm{h}.
    \]
    This proves the lemma.
\end{proof}

\section{Rectifiability of the dual medial axes}\label{sec:dual-theorem}

In this section, we prove Theorem~\ref{thm:intro-dual}. Fix a nonempty closed set $E\subseteq X$. If $E=X$, then $M_k^J(E)=\varnothing$ for every $k$, so the conclusion is immediate. We may therefore assume that $E\ne X$. Moreover, the case $k=1$ is immediate: since $N_E(x)$ is nonempty for every $x\notin E$, we have $M_1^J(E)=X\setminus E$. Every subset of the $n$-dimensional space $X$ is $n$-rectifiable, and hence $\dim M_1^J(E)\le n$. We therefore fix an integer $2\le k\le n+1$ for the remainder of the proof.

\subsection{Quantitative separation of covector configurations}

We consider reference configurations of unit vectors whose images under the duality map are affinely independent. For each configuration, we identify a cone of directions on which these covectors have a uniformly positive spread, stable under small perturbations of the unit vectors. In the next subsection, we apply this estimate to actual contact configurations to obtain cone exclusion.

Let $\mathcal A_k^J$ denote the set of ordered $k$-tuples of unit vectors whose images under the duality map are affinely independent:
\[
    \mathcal A_k^J=\left\{a=(a_1,\ldots,a_k)\in\Sp_X^k:
    J(a_1),\ldots,J(a_k)\text{ are affinely independent}\right\}.
\]
Fix $a\in\mathcal A_k^J$. Since $J(a_1),\ldots,J(a_k)$ are affinely independent, the functionals $J(a_i)-J(a_1)$, $2\le i\le k$, are linearly independent. Letting $\ker$ denote the kernel of a linear functional, define
\begin{equation}\label{eq:Qa}
    Q_a=\bigcap_{i=2}^k\ker\bigl(J(a_i)-J(a_1)\bigr).
\end{equation}
Then $Q_a$ has dimension $n-k+1$. Choose a linear complement $V_a$ of $Q_a$, so that $X=Q_a\oplus V_a$ and $\dim V_a=k-1$. For $h\in X$, define the spread of the covectors $J(a_1),\ldots,J(a_k)$ in the direction $h$ by
\[
    \sigma_a(h)=\max_{1\le i\le k}J(a_i)(h)-\min_{1\le i\le k}J(a_i)(h).
\]
The function $\sigma_a$ is continuous, even, and positively homogeneous. Moreover,
\[
    \sigma_a(h)=0
    \quad\Longleftrightarrow\quad
    J(a_i)(h)=J(a_1)(h)\ \text{for every }i
    \quad\Longleftrightarrow\quad
    h\in Q_a.
\]
Thus the spread $\sigma_a$ vanishes precisely on $Q_a$ and is positive on $V_a\setminus\{0\}$. In addition, define
\[
    \mathcal C_a=\{q+v\ne0:q\in Q_a,\ v\in V_a,\ \enorm{q}<\enorm{v}\}.
\]
Then $\mathcal C_a$ is a symmetric open cone around $V_a\setminus\{0\}$. The following lemma gives a uniform lower bound for $\sigma_a$ on $\mathcal C_a$ and shows that the bound persists when the vectors $a_i$ are perturbed.

\begin{lemma}\label{lem:spread}
    Let $a\in\mathcal A_k^J$. There exist constants $\gamma_a,\varepsilon_a>0$ such that, for all $u=(u_1,\ldots,u_k)$ and $w=(w_1,\ldots,w_k)$ in $\Sp_X^k$, and for all $h\in\mathcal C_a$,
    \[
        \max_{1\le i\le k}\max\{\enorm{u_i-a_i},\enorm{w_i-a_i}\}<\varepsilon_a
        \quad\Longrightarrow\quad
        \max_{1\le i\le k}J(u_i)(h)-\min_{1\le i\le k}J(w_i)(h)\ge\gamma_a\enorm{h}.
    \]
\end{lemma}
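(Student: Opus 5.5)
By positive homogeneity of both sides in $h$, it suffices to prove the inequality for $h\in\mathcal C_a$ with $\enorm{h}=1$. The plan is to get a lower bound on the closure of this set of directions, which is compact, and then absorb the perturbation using uniform continuity of $J$.

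\textbf{Step 1: the unperturbed spread.} Let
\[
    K_a=\{q+v:q\in Q_a,\ v\in V_a,\ \enorm{q}\le\enorm{v},\ \enorm{q+v}=1\}.
\]
This set is compact and contains every unit vector of $\mathcal C_a$. It also avoids $Q_a$. Indeed, if $q+v\in Q_a$, then $v=0$ because the sum $X=Q_a\oplus V_a$ is direct. Then $\enorm{q}\le\enorm{v}=0$ forces $q+v=0$, which contradicts $\enorm{q+v}=1$. Since $\sigma_a$ is continuous and vanishes only on $Q_a$, it attains a positive minimum on $K_a$. Call this minimum $2\gamma_a$.

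\textbf{Step 2: the perturbation.} $J$ is continuous on the compact set $\Sp_X$, hence uniformly continuous there for $\enorm{\cdot}$ and $\enorm{\cdot}_*$. Choose $\varepsilon_a>0$ such that $\enorm{u_i-a_i}<\varepsilon_a$ implies
\[
    \enorm{J(u_i)-J(a_i)}_*<\gamma_a/2,
\]
and likewise for the $w_i$. Now fix $h\in\mathcal C_a$ with $\enorm{h}=1$. Pick indices $p$ and $m$ with
\[
    J(a_p)(h)=\max_iJ(a_i)(h)
    \qquad\text{and}\qquad
    J(a_m)(h)=\min_iJ(a_i)(h).
\]
Then
\[
    \max_iJ(u_i)(h)-\min_iJ(w_i)(h)\ge J(u_p)(h)-J(w_m)(h)\ge J(a_p)(h)-J(a_m)(h)-\gamma_a=\sigma_a(h)-\gamma_a\ge\gamma_a.
\]
Scaling back from $\enorm{h}=1$ to general $h\in\mathcal C_a$ gives the lemma.

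\textbf{Main obstacle.} The only delicate point is Step 1: the open cone is not compact, so we must pass to its closed unit slice and check that this slice still avoids $Q_a$. That check is where the transversality $X=Q_a\oplus V_a$ is used. Everything else follows from compactness and continuity of $J$. Note that the choice of indices $p,m$ from the reference configuration $a$ lets the two different perturbations $u$ and $w$ be handled simultaneously.
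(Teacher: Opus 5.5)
Your proof is correct and follows essentially the same route as the paper: compactness gives a positive lower bound for the unperturbed spread $\sigma_a$ on the cone $\mathcal C_a$, and uniform continuity of $J$ on $\Sp_X$ absorbs the perturbations of $u$ and $w$. The only difference is which compact set you minimize over: the paper minimizes $\sigma_a$ on the Euclidean unit sphere of $V_a$ and then uses $\sigma_a(q+v)=\sigma_a(v)$ for $q\in Q_a$ together with $\enorm{q}<\enorm{v}$, whereas you minimize directly on the closed unit slice of the cone after checking that it misses $Q_a$.
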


\begin{proof}
    Let $S_a=\{v\in V_a:\enorm{v}=1\}$. Since $\sigma_a$ is continuous and positive on the compact set $S_a$,
    \[
        c_a=\min_{v\in S_a}\sigma_a(v)>0.
    \]
    For $q\in Q_a$, the values $J(a_i)(q)$ are independent of $i$, so $\sigma_a(q+v)=\sigma_a(v)$ for every $v\in V_a$. Thus, if $h=q+v\in\mathcal C_a$, then $v\ne0$, and positive homogeneity of $\sigma_a$ together with the triangle inequality gives
    \[
        \sigma_a(h)=\sigma_a(v)=\enorm{v}\sigma_a\left(\frac{v}{\enorm{v}}\right)\ge c_a\enorm{v}\ge\frac{c_a}{2}\left(\enorm{q}+\enorm{v}\right)\ge\frac{c_a}{2}\enorm{h}.
    \]
    
    Since $\Sp_X$ is compact and $J$ is continuous, $J$ is uniformly continuous on $\Sp_X$. We may therefore choose $\varepsilon_a>0$ such that, for all $x,y\in\Sp_X$,
    \[
        \enorm{x-y}<\varepsilon_a
        \quad\Longrightarrow\quad
        \enorm{J(x)-J(y)}_*\le\frac{c_a}{8}.
    \]

    Now let $u=(u_1,\ldots,u_k)$ and $w=(w_1,\ldots,w_k)$ be in $\Sp_X^k$ such that $\enorm{u_i-a_i}<\varepsilon_a$ and $\enorm{w_i-a_i}<\varepsilon_a$ for every $i$. Then, for every $h\in\mathcal C_a$,
    \[
        \max_{1\le i\le k}J(u_i)(h)\ge\max_{1\le i\le k}J(a_i)(h)-\frac{c_a}{8}\enorm{h}
        \quad\text{and}\quad
        \min_{1\le i\le k}J(w_i)(h)\le\min_{1\le i\le k}J(a_i)(h)+\frac{c_a}{8}\enorm{h}.
    \]
    Hence
    \[
        \max_{1\le i\le k}J(u_i)(h)-\min_{1\le i\le k}J(w_i)(h)
        \ge \sigma_a(h)-\frac{c_a}{4}\enorm{h}
        \ge \frac{c_a}{4}\enorm{h}.
    \]
    Thus the result holds with $\gamma_a=c_a/4$.
\end{proof}

\begin{remark}
    The same choice of $\varepsilon_a$ ensures that $J(u_1),\ldots,J(u_k)$ are affinely independent whenever $\enorm{u_i-a_i}<\varepsilon_a$ for every $i$. Indeed, otherwise the subspace $Q_u$ defined by~\eqref{eq:Qa} with $u_i$ in place of $a_i$ would have dimension at least $n-k+2$. Since $\dim V_a=k-1$, we have $\dim(Q_u\cap V_a)\ge\dim Q_u+\dim V_a-n\ge1$. Choose $h\in(Q_u\cap V_a)\setminus\{0\}$. Then $h\in\mathcal C_a$ and $J(u_1)(h)=\cdots=J(u_k)(h)$, contradicting Lemma~\ref{lem:spread} applied with $w=u$.
\end{remark}

\subsection{A cone-exclusion argument}

We now localize the dual medial axis according to the distance to $E$ and the configuration of contact vectors. For a fixed $a\in\mathcal A_k^J$, we will show that points with contact vectors close to $a$ cannot approach one another in directions belonging to the cone $\mathcal C_a$. This cone-exclusion property will yield the local Lipschitz graph structure in the next subsection.

Let $a=(a_1,\ldots,a_k)\in\mathcal A_k^J$ and $d>0$, and let $\varepsilon_a>0$ be the constant given by Lemma~\ref{lem:spread}. Define
\begin{align*}
    \cM(a,d)
    =\bigl\{x\notin E:\,
    &|d_E(x)-d|<d/2,\ \text{and for each }1\le i\le k,\\
    &\text{there exists }u_i\in U_E(x)\text{ such that }\enorm{u_i-a_i}<\varepsilon_a
    \bigr\}.
\end{align*}
The remark following the proof of Lemma~\ref{lem:spread} shows that the corresponding covectors $J(u_1),\ldots,J(u_k)$ are affinely independent; hence $\cM(a,d)\subseteq M_k^J(E)$.

\begin{lemma}\label{lem:cone-exclusion}
    Let $a\in\mathcal A_k^J$ and $d>0$. There exists $\tau_{a,d}>0$ such that for all $x,y\in\cM(a,d)$,
    \[
        \enorm{y-x}\le\tau_{a,d}\quad\Longrightarrow\quad y-x\notin\mathcal C_a.
    \]
\end{lemma}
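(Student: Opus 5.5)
Suppose $x,y\in\cM(a,d)$ with $h=y-x\in\mathcal C_a$ and $\enorm{h}\le\tau$, where $\tau$ is to be fixed. We aim for a contradiction. Write $r=d_E(x)$ and $s=d_E(y)$, both in $(d/2,3d/2)$. Choose $u_i\in U_E(x)$ and $w_i\in U_E(y)$ with $\enorm{u_i-a_i},\enorm{w_i-a_i}<\varepsilon_a$. Put $p_i=x+ru_i\in E$ and $p'_i=y+sw_i\in E$. The idea is to compare the squared distances from $x$ and from $y$ to these nearest points, expanding each to first order with Lemma~\ref{lem:uniform-expansion}. Apply that lemma on the compact set $K=\{z:\norm{z}\le 3d/2\}$, and use \eqref{eq:squared-norm-property} to read off $DF(ru_i)=rJ(u_i)$ and $DF(-sw_i)=-sJ(w_i)$.

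First compare $y$ with the points $p_i$. Since $p_i\in E$ and $p_i-y=ru_i-h$, we get
\[
\frac{s^2}{2}\le F(ru_i-h)\le\frac{r^2}{2}-rJ(u_i)(h)+\omega_K(\enorm{h})\enorm{h}
\]
for every $i$, so
\[
\frac{s^2-r^2}{2}\le -r\max_i J(u_i)(h)+\omega_K(\enorm{h})\enorm{h}.
\]
Symmetrically, compare $x$ with the points $p'_i$. We have $p'_i-x=sw_i+h$ and $\norm{p'_i-x}\ge r$, which gives
\[
\frac{r^2}{2}\le\frac{s^2}{2}+sJ(w_i)(h)+\omega_K(\enorm{h})\enorm{h},
\]
so
\[
\frac{r^2-s^2}{2}\le s\min_i J(w_i)(h)+\omega_K(\enorm{h})\enorm{h}.
\]
Adding the two inequalities yields
\[
r\max_iJ(u_i)(h)-s\min_iJ(w_i)(h)\le 2\omega_K(\enorm{h})\enorm{h}.
\]

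The main obstacle is that the radii $r$ and $s$ differ, so Lemma~\ref{lem:spread} does not apply directly. I would handle this by writing
\[
r\max_i J(u_i)(h)-s\min_i J(w_i)(h)=r\Bigl(\max_i J(u_i)(h)-\min_i J(w_i)(h)\Bigr)+(r-s)\min_i J(w_i)(h).
\]
Since $d_E$ is $1$-Lipschitz for $\norm{\cdot}$, we have $|r-s|\le\norm{h}\le C_X\enorm{h}$. Also $|J(w_i)(h)|\le\norm{h}\le C_X\enorm{h}$ because $\dualnorm{J(w_i)}=1$. So the error term is at most $C_X^2\enorm{h}^2$. Lemma~\ref{lem:spread} bounds the first term from below by $r\gamma_a\enorm{h}\ge(d/2)\gamma_a\enorm{h}$. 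Combining,
\[
\frac{d\gamma_a}{2}\enorm{h}\le\bigl(2\omega_K(\enorm{h})+C_X^2\enorm{h}\bigr)\enorm{h}.
\]

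Now choose $\tau_{a,d}>0$ so that $2\omega_K(\tau)+C_X^2\tau<d\gamma_a/2$; this is possible because $\omega_K$ is continuous with $\omega_K(0)=0$. Since $h\ne0$ (elements of $\mathcal C_a$ are nonzero), we can divide by $\enorm{h}$, and monotonicity of $\omega_K$ then gives a contradiction whenever $\enorm{h}\le\tau_{a,d}$. This proves the lemma.
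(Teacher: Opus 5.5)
Your proposal is correct and follows essentially the same argument as the paper: each center is compared with the other's nearest points, $F$ is expanded to first order via Lemma~\ref{lem:uniform-expansion}, the radius mismatch $(r-s)J(w)(h)$ is absorbed using the $1$-Lipschitz property of $d_E$, and Lemma~\ref{lem:spread} closes the argument. The differences are cosmetic. You take the max and min over all indices instead of fixing the two indices supplied by Lemma~\ref{lem:spread}, and you use a ball rather than an annulus as the compact set.
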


\begin{proof}
    Let $\gamma_a>0$ be the constant given by Lemma~\ref{lem:spread}. Let $K$ be the compact annulus consisting of all points $\xi\in X$ such that $d/2\le\norm{\xi}\le3d/2$, and let $\omega_K$ be the modulus given by Lemma~\ref{lem:uniform-expansion}. Since $\omega_K(\tau)\to0$ as $\tau\to0$, we may choose $\tau_{a,d}>0$ small enough to ensure that for every $0\le\tau\le\tau_{a,d}$,
    \[
        C_X^2\tau+2\omega_K(\tau)\le\frac{d\gamma_a}{2}.
    \]

    Suppose, toward a contradiction, that there exist $x,y\in\cM(a,d)$ with $h=y-x\in\mathcal C_a$ and $\enorm{h}\le\tau_{a,d}$. Choose $u_1,\ldots,u_k\in U_E(x)$ and $w_1,\ldots,w_k\in U_E(y)$ such that $\enorm{u_i-a_i}<\varepsilon_a$ and
    $\enorm{w_i-a_i}<\varepsilon_a$ for every $i$. Since $h\in\mathcal C_a$, Lemma~\ref{lem:spread} yields indices $i,j$ such that
    \[
        J(u_i)(h)-J(w_j)(h)\ge\gamma_a\enorm{h}.
    \]
    The corresponding nearest points are $p=x+ru_i\in E$ and $q=y+sw_j\in E$, where $r=d_E(x)$ and $s=d_E(y)$. Note that $|s-r|\le\norm{h}$ because $d_E$ is $1$-Lipschitz with respect to $\norm{\cdot}$. Since $\dualnorm{J(w_j)}=1$, we deduce that
    \begin{align*}
        -rJ(u_i)(h)+sJ(w_j)(h)&=-r\left(J(u_i)(h)-J(w_j)(h)\right)+(s-r)J(w_j)(h)\\
        &\le-r\gamma_a\enorm{h}+\norm{h}^2\le-r\gamma_a\enorm{h}+C_X^2\enorm{h}^2.
    \end{align*}

    Since $|r-d|,|s-d|<d/2$, both $ru_i$ and $sw_j$ belong to $K$. Applying Lemma~\ref{lem:uniform-expansion} at these points and using~\eqref{eq:squared-norm-property} gives
    \[
        F(ru_i-h)=\frac{r^2}{2}-rJ(u_i)(h)+R_1
        \quad\text{and}\quad
        F(sw_j+h)=\frac{s^2}{2}+sJ(w_j)(h)+R_2,
    \]
    where $|R_1|+|R_2|\le2\omega_K(\enorm{h})\enorm{h}$. In addition, since $p,q\in E$, we have
    \[
        r=d_E(x)\le\norm{x-q}=\norm{sw_j+h}
        \qquad\text{and}\qquad
        s=d_E(y)\le\norm{y-p}=\norm{ru_i-h}.
    \]
    It follows that
    \begin{align*}
        0&\le F(sw_j+h)-\frac{r^2}{2}=\frac{s^2}{2}-\frac{r^2}{2}+sJ(w_j)(h)+R_2\le F(ru_i-h)-\frac{r^2}{2}+sJ(w_j)(h)+R_2\\
        &=-rJ(u_i)(h)+R_1+sJ(w_j)(h)+R_2\le-r\gamma_a\enorm{h}+C_X^2\enorm{h}^2+2\omega_K(\enorm{h})\enorm{h}\le\left(\frac{d}{2}-r\right)\gamma_a\enorm{h}.
    \end{align*}
    Since $r>d/2$ and $\gamma_a\enorm{h}>0$, this is a contradiction. The lemma follows.
\end{proof}

\subsection{Completion of the proof of Theorem~\ref{thm:intro-dual}}

We can now complete the proof of Theorem~\ref{thm:intro-dual}, assuming that $E\ne X$ and $k\ge2$, as mentioned at the beginning of the section. Fix $a\in\mathcal A_k^J$ and $d>0$. By Lemma~\ref{lem:cone-exclusion}, there exists $\tau_{a,d}>0$ such that any two points of $\cM(a,d)$ at Euclidean distance at most $\tau_{a,d}$ have difference outside $\mathcal C_a$. Cover $X$ by a countable family of sets of Euclidean diameter at most $\tau_{a,d}$. For every such set $B$, the set $A_B=\cM(a,d)\cap B$ satisfies $(A_B-A_B)\cap\mathcal C_a=\varnothing$. By the form of $\mathcal C_a$ and Lemma~\ref{lem:cone-criterion}, $A_B$ is $(n-k+1)$-rectifiable. Taking the countable union over all such sets $B$ proves that $\cM(a,d)$ is $(n-k+1)$-rectifiable.

It remains to cover $M_k^J(E)$ by countably many such quantitative pieces. The parameter space $\mathcal A_k^J\times(0,\infty)$ is second countable, hence Lindelöf. For $(a,d)\in\mathcal A_k^J\times(0,\infty)$, let
\[
    \mathcal B(a,d)=
    \left\{(u,r)\in\mathcal A_k^J\times(0,\infty):
    \enorm{u_i-a_i}<\varepsilon_a\ \text{for every }i\text{, and }|r-d|<d/2\right\}.
\]
These sets form an open cover of $\mathcal A_k^J\times(0,\infty)$. Hence there is a countable subcover $\{\mathcal B(a^\ell,d^\ell)\}_{\ell\ge1}$, where $a^\ell=(a_1^\ell,\ldots,a_k^\ell)$. Now let $x\in M_k^J(E)$. By definition, there exist $u_1,\ldots,u_k\in U_E(x)$ such that $J(u_1),\ldots,J(u_k)$ are affinely independent. Thus $u=(u_1,\ldots,u_k)\in\mathcal A_k^J$, and $(u,d_E(x))\in\mathcal A_k^J\times(0,\infty)$. Hence $(u,d_E(x))\in\mathcal B(a^\ell,d^\ell)$ for some $\ell$, which implies $x\in\cM(a^\ell,d^\ell)$. Since each $\cM(a^\ell,d^\ell)$ is contained in $M_k^J(E)$, we obtain
\[
    M_k^J(E)=\bigcup_{\ell=1}^{\infty}\cM(a^\ell,d^\ell).
\]
Each set on the right is $(n-k+1)$-rectifiable, and thus so is $M_k^J(E)$. In particular, $\dim M_k^J(E)\le n-k+1$.

\section{Dual medial axes and semiconcavity}\label{sec:semiconcavity}

We now relate the dual medial axes to the singularities of the squared distance function. We identify $M_k^J(E)$ with the set where the superdifferential of $d_E^2/2$ has dimension at least $k-1$, and deduce an alternative proof of Theorem~\ref{thm:intro-dual} from the theorem of Alberti, Ambrosio, and Cannarsa~\cite{Alberti1992}. The results of~\cite{Alberti1992} are formulated in $\R^n$. We state the relevant definitions directly on $X$ and explain below how to apply their theorem through a linear isometry from $(X,\enorm{\cdot})$ onto Euclidean $\R^n$.

We first recall the notions of superdifferential and subdifferential.

\begin{definition}
    Let $S\subseteq X$ be open and let $u:S\to\R$. For $x\in S$, define
    \begin{align*}
        &D^+u(x)=\left\{p\in X^*:\limsup_{y\to x}\frac{u(y)-u(x)-p(y-x)}{\norm{y-x}}\le0\right\}\\
        \text{and}\quad&D^-u(x)=\left\{p\in X^*:\liminf_{y\to x}\frac{u(y)-u(x)-p(y-x)}{\norm{y-x}}\ge0\right\}.
    \end{align*}
    These sets are called, respectively, the \emph{superdifferential} and the \emph{subdifferential} of $u$ at $x$.
\end{definition}

We also recall the notion of semiconcavity with a general modulus.

\begin{definition}
    Let $S\subseteq X$. A function $u:S\to\R$ is \emph{semiconcave} if there exists a nondecreasing upper semicontinuous function $\omega:[0,\infty)\to[0,\infty)$, with $\omega(\rho)\to0$ as $\rho\to0$, such that
    \[
        \lambda u(x)+(1-\lambda)u(y)-u\bigl(\lambda x+(1-\lambda)y\bigr)
        \le\lambda(1-\lambda)\norm{x-y}\omega(\norm{x-y})
    \]
    whenever $x,y\in S$, $[x,y]\subseteq S$, and $\lambda\in[0,1]$. The function $\omega$ is called a \emph{modulus of semiconcavity}. A function $u$ is \emph{semiconvex} if $-u$ is semiconcave, and is \emph{locally semiconcave} on a set $S$ if it is semiconcave on every compact subset of $S$.
\end{definition}

We will use the following result of Alberti, Ambrosio, and Cannarsa.

\begin{theorem}[{Alberti--Ambrosio--Cannarsa~\cite[Theorem~4.1]{Alberti1992}}]\label{thm:AAC}
    Let $\Omega\subseteq\R^n$ be bounded, convex, and open, and let $u:\Omega\to\R$ be semiconvex and Lipschitz continuous. For every integer $0\le m\le n$, the set
    \[
        \Sigma^m(u)=\{x\in\Omega:\dim D^-u(x)\ge m\}
    \]
    is $(n-m)$-rectifiable.
\end{theorem}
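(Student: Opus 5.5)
The plan is to adapt the cone-exclusion argument of Section~\ref{sec:dual-theorem}. Lemma~\ref{lem:spread} and Lemma~\ref{lem:cone-exclusion} will be rerun with elements of $D^-u$ playing the role of the contact covectors and semiconvexity playing the role of the nearest-point inequalities. Throughout, I work on $\R^n$ with the Euclidean norm, identifying covectors with vectors.

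\emph{Step 1: a quasi-monotonicity inequality.} First I would show that for semiconvex $u$ on the convex open set $\Omega$, with modulus $\omega$, every $p\in D^-u(x)$ satisfies
\[
u(y)\ge u(x)+p(y-x)-\omega(\enorm{y-x})\enorm{y-x}\qquad\text{for all } y\in\Omega.
\]
Convexity of $\Omega$ guarantees $[x,y]\subseteq\Omega$. The derivation is standard: apply the semiconvexity inequality to $x$, $y$ and $x+\lambda(y-x)$, divide by $\lambda$, and let $\lambda\to0$ using $p\in D^-u(x)$. Writing the inequality at $x$ with $p$ and at $y$ with $q\in D^-u(y)$, and adding, gives, with $h=y-x$,
\[
(q-p)(h)\ge-2\,\omega(\enorm{h})\enorm{h}.
\]
I would also record the following facts.
\begin{itemize}
\item Lipschitz continuity makes $D^-u(x)$ a subset of a fixed compact ball, of radius the Lipschitz constant $L$.
\item $D^-u(x)$ is convex, so $\dim D^-u(x)\ge m$ means it contains $m+1$ affinely independent points.
\item The case $m=0$ is trivial, since every subset of $\R^n$ is $n$-rectifiable; so fix $m\ge1$.
\end{itemize}

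\emph{Step 2: quantitative pieces and cone exclusion.} Fix a tuple $b=(b_0,\dots,b_m)$ of affinely independent covectors. Let
\[
Q_b=\bigcap_{i\ge1}\ker(b_i-b_0),
\]
which has dimension $n-m$, and let $V_b$ be a complement. Define the cone $\mathcal C_b$ as in Section~\ref{sec:dual-theorem}. The first half of the proof of Lemma~\ref{lem:spread} applies verbatim, with $J(a_i)$ replaced by $b_i$ and no continuity of $J$ needed: it gives $\min_{\mathcal C_b}\sigma_b(h)/\enorm{h}\ge c_b/2>0$. Taking $\varepsilon_b=c_b/8$ then yields the following. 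If $p_i,q_i$ satisfy $\enorm{p_i-b_i}_*,\enorm{q_i-b_i}_*<\varepsilon_b$, then for every $h\in\mathcal C_b$,
\[
\max_i p_i(h)-\min_j q_j(h)\ge\gamma_b\enorm{h}.
\]
Let $\Sigma_b$ be the set of $x$ such that $D^-u(x)$ contains points $p_i$ with $\enorm{p_i-b_i}_*<\varepsilon_b$ for every $i$. Choose $\tau_b$ with $2\omega(\tau_b)<\gamma_b$. Suppose $x,y\in\Sigma_b$ with $h=y-x\in\mathcal C_b$ and $\enorm{h}\le\tau_b$. Pick $i$ maximizing $p_i(h)$ and $j$ minimizing $q_j(h)$. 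Then
\[
(q_j-p_i)(h)\le-\gamma_b\enorm{h}<-2\,\omega(\enorm{h})\enorm{h},
\]
which contradicts Step 1. If $\omega$ is merely upper semicontinuous with $\omega(0^+)=0$, such a $\tau_b$ still exists. Lemma~\ref{lem:cone-criterion}, applied on countably many pieces of diameter at most $\tau_b$, then shows that $\Sigma_b$ is $(n-m)$-rectifiable.

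\emph{Step 3: covering.} The open set of affinely independent $(m+1)$-tuples of covectors is second countable. The neighbourhoods $\{b':\enorm{b'_i-b_i}_*<\varepsilon_b\}$ cover it, so a countable subfamily $b^\ell$ suffices. Every $x\in\Sigma^m(u)$ has an affinely independent tuple in $D^-u(x)$, and that tuple lies in some neighbourhood of $b^\ell$, so $x\in\Sigma_{b^\ell}$. Hence $\Sigma^m(u)=\bigcup_\ell\Sigma_{b^\ell}$ is $(n-m)$-rectifiable.

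The main obstacle I expect is Step 1 under a general modulus. One must check that the limit $\lambda\to0$ produces exactly the error $\omega(\enorm{y-x})\enorm{y-x}$, possibly with a harmless constant factor, using only monotonicity and upper semicontinuity of $\omega$ and not any continuity. The rest is a direct transcription of Section~\ref{sec:dual-theorem}.
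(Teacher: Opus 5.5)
Your proof is correct. It differs from the paper for a simple reason: the paper does not prove this theorem. It imports it from \cite{Alberti1992} as a black box and applies it to $-g_E$ in Section~\ref{sec:semiconcavity}, after Lemma~\ref{lem:gEconcave} and Proposition~\ref{prop:gEconv}.

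You instead rerun the cone-exclusion argument of Section~\ref{sec:dual-theorem} for an abstract semiconvex $u$, with two changes:
\begin{enumerate}
\item the quasi-monotonicity inequality $(q-p)(h)\ge-2\omega(\enorm{h})\enorm{h}$ replaces the nearest-point comparison in Lemma~\ref{lem:cone-exclusion};
\item the pieces are indexed directly by covector tuples $b$, so neither the uniform continuity of $J$ nor the localization $\lvert d_E(x)-d\rvert<d/2$ is needed, since the scale is absorbed into the covectors themselves.
\end{enumerate}
Every step checks out. You only need the inclusion $\Sigma^m(u)\subseteq\bigcup_\ell\Sigma_{b^\ell}$, not equality.

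The difficulty you anticipate in Step~1 does not arise. Semiconvexity at $x$, $y$, $x+\lambda(y-x)$ gives
$u(x+\lambda(y-x))-u(x)\le\lambda\bigl(u(y)-u(x)\bigr)+\lambda(1-\lambda)\enorm{y-x}\,\omega(\enorm{y-x})$.
Here $\omega$ is evaluated at the fixed length $\enorm{y-x}$, so dividing by $\lambda$ and letting $\lambda\downarrow0$ gives exactly the error $\omega(\enorm{y-x})\enorm{y-x}$, with no continuity of $\omega$ needed. Your argument also never uses Lipschitz continuity of $u$ or boundedness of $\Omega$, so the first bullet of Step~1 is idle. Convexity of $\Omega$ serves only to keep segments inside $\Omega$.

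As for what each route buys: the citation keeps Section~\ref{sec:semiconcavity} short and places $M_k^J(E)$ within the general singularity theory of semiconcave functions. Your version makes the paper self-contained. It also shows that, once Proposition~\ref{prop:gEconv} is inserted, the ``alternative'' proof via \cite{Alberti1992} runs on the same mechanism as the direct proof of Section~\ref{sec:dual-theorem}. Indeed, the chain of inequalities in the proof of Lemma~\ref{lem:cone-exclusion} first establishes essentially your quasi-monotonicity inequality for $-g_E$, applied to $rJ(u_i)\in D^-(-g_E)(x)$ and $sJ(w_j)\in D^-(-g_E)(y)$. It then contradicts that inequality with the spread bound of Lemma~\ref{lem:spread}, exactly as in your Step~2.
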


Although the preceding notions are defined using $\norm{\cdot}$, replacing it by the equivalent Euclidean norm $\enorm{\cdot}$ leaves the subdifferentials and superdifferentials unchanged and preserves semiconcavity and semiconvexity after adjusting the modulus. Lipschitz continuity is also preserved. Moreover, a linear identification of $X$ with $\R^n$ preserves convexity, openness, and boundedness. We may therefore apply Theorem~\ref{thm:AAC} to bounded convex open subsets of $X$.

Recall that $F$ is the squared norm defined in~\eqref{eq:squared-norm}. For $x\in X$, define
\[
    g_E(x)=\frac{d_E(x)^2}{2}=\min_{y\in E}F(x-y).
\]
We will relate its superdifferential outside $E$ to the contact covectors.

\begin{lemma}\label{lem:gEconcave}
    The function $g_E$ is locally semiconcave on $X$.
\end{lemma}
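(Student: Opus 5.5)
The plan is to write $g_E$ as an infimum of translates of the $C^1$ function $F$ and to transfer a uniform semiconcavity estimate from $F$ to $g_E$. A pointwise infimum of functions sharing a common modulus of semiconcavity is semiconcave with that modulus. So it suffices to show the following: on a compact convex set, every function $x\mapsto F(x-y)$ that can realize the minimum satisfies the semiconcavity inequality with one modulus.

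Fix a compact set $S\subseteq X$. Segments $[x,y]\subseteq S$ automatically lie in $S$, so only points of $S$ matter. Let $R=\max_{x\in S}d_E(x)$, which is finite since $S$ is compact and $d_E$ is continuous. For $x\in S$ the minimum defining $g_E(x)$ is attained at $y\in\Phi_E(x)$ with $\norm{x-y}\le R$. Hence, for $x\in S$,
\[
g_E(x)=\min\{F(x-y):y\in E,\ \norm{x-y}\le R+\operatorname{diam}_{\norm{\cdot}}S\}.
\]
More usefully, for fixed $x,x'\in S$ and $\lambda\in[0,1]$, set $z=\lambda x+(1-\lambda)x'$ and choose $y\in\Phi_E(z)$. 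Then
\[
\lambda g_E(x)+(1-\lambda)g_E(x')-g_E(z)\le\lambda F(x-y)+(1-\lambda)F(x'-y)-F(z-y).
\]
All the points $x-y$, $x'-y$, $z-y$ lie in the compact set $K=\{\xi:\norm{\xi}\le R+\operatorname{diam}_{\norm{\cdot}}S\}$.

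Now apply Lemma~\ref{lem:uniform-expansion} at the base point $z-y\in K$ with increments $h=(1-\lambda)(x-x')$ and $h'=-\lambda(x-x')$. Writing $F(z-y+h)=F(z-y)+DF(z-y)(h)+R_1$, and similarly for $h'$, the linear terms cancel because $\lambda h+(1-\lambda)h'=0$. The right-hand side is therefore at most
\[
\lambda\,\omega_K(\enorm{h})\enorm{h}+(1-\lambda)\,\omega_K(\enorm{h'})\enorm{h'}\le 2\lambda(1-\lambda)\enorm{x-x'}\,\omega_K(\enorm{x-x'}),
\]
using $\enorm{h}=(1-\lambda)\enorm{x-x'}$, $\enorm{h'}=\lambda\enorm{x-x'}$ and monotonicity of $\omega_K$. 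Finally, convert to $\norm{\cdot}$ via~\eqref{Lip_constant}. This gives the modulus $\omega(\rho)=2C_X^2\,\omega_K(C_X\rho)$, which is continuous, nondecreasing and vanishes at $0$, hence admissible.

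The main step requiring care is the uniformity: the nearest point $y$ depends on $z$, so I must ensure all translated arguments stay in one fixed compact set. This is handled by the bound $\norm{z-y}=d_E(z)\le R$ together with $\norm{x-z},\norm{x'-z}\le\operatorname{diam}S$. With that in place, the estimate from Lemma~\ref{lem:uniform-expansion} is uniform in the base point and the argument closes.
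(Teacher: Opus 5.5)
Your proof is correct and follows essentially the same route as the paper. You fix a nearest point $y\in\Phi_E(z)$ of the intermediate point $z$ and bound the semiconcavity defect of $g_E$ by that of $F(\cdot-y)$. You then control that defect with Lemma~\ref{lem:uniform-expansion} on a fixed compact set, using cancellation of the linear terms. The only cosmetic difference is that the paper first isolates the semiconcavity of $F$ on a Euclidean ball (with modulus $2\omega_B$) and works with $\enorm{\cdot}$ throughout, whereas you expand directly at $z-y$ and convert to $\norm{\cdot}$ at the end, where $2C_X\,\omega_K(C_X\rho)$ would already suffice.
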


\begin{proof}
    By the norm-invariance just discussed, it suffices to prove semiconcavity with respect to the auxiliary Euclidean norm. Let $K$ be a nonempty compact subset of $X$, and set
    \[
        d_1=\sup_{x_1,x_2\in K}\enorm{x_1-x_2}
        \qquad\text{and}\qquad
        d_2=\sup_{x\in K}d_E(x).
    \]
    Consider the compact Euclidean ball $B=\{w\in X:\enorm{w}\le d_1+C_Xd_2\}$. If $x_1,x_2\in K$ and $y_2\in\Phi_E(x_2)$, then $x_1-y_2\in B$. In particular, all the vectors arising below lie in $B$. We first observe that $F$ is semiconcave on $B$. Let $a,b\in B$, $\lambda\in[0,1]$, and $z=\lambda a+(1-\lambda)b$. By Lemma~\ref{lem:uniform-expansion},
    \[
        F(a)-F(z)-DF(z)(a-z)\le\omega_B(\enorm{a-z})\enorm{a-z},
    \]
    and the analogous inequality holds with $b$ in place of $a$. Multiplying these inequalities by $\lambda$ and $1-\lambda$, respectively, and using $\lambda(a-z)+(1-\lambda)(b-z)=0$, we obtain
    \begin{equation}\label{eq:F_semiconcave}
        \lambda F(a)+(1-\lambda)F(b)-F(z)\le
        2\lambda(1-\lambda)\omega_B(\enorm{a-b})\enorm{a-b}.
    \end{equation}
    Thus $F$ is semiconcave on $B$, with modulus $2\omega_B$.

    Now let $x_1,x_2\in K$ with $[x_1,x_2]\subseteq K$, let $z=\lambda x_1+(1-\lambda)x_2$, and choose $y_z\in\Phi_E(z)$. We have $g_E(z)=F(z-y_z)$ and $g_E(x_i)\le F(x_i-y_z)$ for $i=1,2$. The vectors $x_1-y_z$, $x_2-y_z$, and $z-y_z$ belong to $B$, so~\eqref{eq:F_semiconcave} yields
    \[
        \lambda g_E(x_1)+(1-\lambda)g_E(x_2)-g_E(z)
        \le 2\lambda(1-\lambda)\omega_B(\enorm{x_1-x_2})\enorm{x_1-x_2}.
    \]
    Hence $g_E$ is semiconcave on $K$. Since $K$ was arbitrary, $g_E$ is locally semiconcave on $X$.
\end{proof}

The next proposition identifies the superdifferential of $g_E$ in terms of the contact covectors.

\begin{proposition}\label{prop:gEconv}
    For every $x\in X\setminus E$,
    \[
        D^+g_E(x)=\conv\{-d_E(x)J(u):u\in U_E(x)\}.
    \]
    Consequently, for every $1\le k\le n+1$,
    \[
        M_k^J(E)=\{x\in X\setminus E:\dim D^+g_E(x)\ge k-1\}.
    \]
\end{proposition}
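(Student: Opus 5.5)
The proof has two halves: the inclusion ``$\supseteq$'' by direct first-order estimates, and ``$\subseteq$'' by the standard structure of superdifferentials of semiconcave functions. Write $r=d_E(x)>0$ and $P=\{-rJ(u):u\in U_E(x)\}$.

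For ``$\supseteq$'', fix $u\in U_E(x)$ with nearest point $y=x+ru$. For $h$ small, $g_E(x+h)\le F(x+h-y)=F(-ru+h)$. By Lemma~\ref{lem:uniform-expansion} on a compact annulus around $-ru$ and by \eqref{eq:squared-norm-property}, this equals $r^2/2-rJ(u)(h)+o(\norm{h})$. Hence $-rJ(u)\in D^+g_E(x)$. Since $D^+g_E(x)$ is convex (a limsup of a sum is at most the sum of the limsups, applied to convex combinations) and closed, it contains $\conv P$. The set $\conv P$ is compact because $U_E(x)$ is compact and $J$ is continuous.

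For ``$\subseteq$'', the key step is the reachable-gradient description. I would argue as follows.
\begin{enumerate}
\item Take $x_j\to x$ with $x_j\notin E$ and $g_E$ differentiable at $x_j$. At such a point the inclusion just proved forces $D^+g_E(x_j)=\{Dg_E(x_j)\}$, so $Dg_E(x_j)=-d_E(x_j)J(u_j)$ for any $u_j\in U_E(x_j)$.
\item By compactness and upper semicontinuity of the nearest-point map (closedness of $E$, continuity of $d_E$), a subsequence has $u_j\to u\in U_E(x)$. Continuity of $J$ then gives $Dg_E(x_j)\to -rJ(u)\in P$. So every reachable gradient lies in $P$.
\item For a locally semiconcave function, which $g_E$ is by Lemma~\ref{lem:gEconcave}, $D^+g_E(x)$ is the closed convex hull of the reachable gradients. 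This is the standard result of Alberti--Ambrosio--Cannarsa and Cannarsa--Sinestrari for general moduli.
\end{enumerate}
Step 3 is the main obstacle if one wants a self-contained argument. I would first try to cite it. Failing that, I would prove it by Hahn--Banach: suppose $p\in D^+g_E(x)\setminus\conv P$, and separate $p$ from $\conv P$ by a direction $h$ with $p(h)<\min_{q\in P}q(h)-\delta$. Then argue directly without differentiability. Along $x+th$, $t\downarrow0$, pick $u_t\in U_E(x+th)$. The nearest-point comparison from $x+th$ back to $x$ gives
\[
    g_E(x)\le F(-d_E(x+th)u_t - th),
\]
and hence
\[
    g_E(x+th)-g_E(x)\ge -d_E(x+th)J(u_t)(th)...
\]
with the appropriate sign. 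More precisely, expanding as in Lemma~\ref{lem:cone-exclusion} yields
\[
    g_E(x+th)\ge g_E(x)+t\,q_t(h)-o(t),\qquad q_t=-d_E(x+th)J(u_t).
\]
Any limit point of $u_t$ lies in $U_E(x)$, so $\liminf_t q_t(h)\ge\min_{q\in P}q(h)$. Combined with the superdifferential inequality $g_E(x+th)\le g_E(x)+tp(h)+o(t)$, this gives $p(h)\ge\min_{q\in P} q(h)$, a contradiction. This directional argument avoids the reachable-gradient theorem entirely, so I would adopt it as the main route.

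For the consequence, the affine dimension is unchanged by convex hull and by multiplication by $-r\neq0$. Hence $\dim D^+g_E(x)=\dim\aff P=\dim\aff N_E(x)$, and Definition~\ref{def:medial-axes} gives the stated identity for $M_k^J(E)$.
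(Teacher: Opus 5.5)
Your proposal is correct and, in the route you finally adopt, it is essentially the paper's proof. The inclusion $\supseteq$ comes from the first-order expansion of $F$ at the contact points together with convexity of $D^+g_E(x)$. The inclusion $\subseteq$ comes from strict separation combined with a one-sided directional estimate: you take nearest points of $x+th$, let them subconverge into $\Phi_E(x)$, and expand $F$ uniformly on a compact set. The differences are only organizational. The paper first proves the full formula $g_E'(x;h)=\min_{y\in\Phi_E(x)}DF(x-y)(h)$ and then separates, whereas you separate first, expand at $x+th-y_t$ instead of at $x-y_t$, and reach the contradiction directly; in either form the reachable-gradient detour is unnecessary.
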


\begin{proof}
    Let $x\in X\setminus E$. It suffices to prove that
    \begin{equation}\label{eq:D+gE=convDF(x-y)}
        D^+g_E(x)=\conv\{DF(x-y):y\in\Phi_E(x)\}.
    \end{equation}
    Indeed, the correspondence $u=(y-x)/d_E(x)$ maps $\Phi_E(x)$ bijectively onto $U_E(x)$, and~\eqref{eq:squared-norm-property} gives $DF(x-y)=-d_E(x)J(u)$. This yields the first assertion. Since nonzero scaling and passage to the convex hull preserve affine dimension, and Hausdorff dimension agrees with affine dimension for nonempty convex sets, it also gives
    \[
        \dim D^+g_E(x)=\dim\aff J(U_E(x))=\dim\aff N_E(x).
    \]
    The second assertion then follows from the definition of $M_k^J(E)$.

    We now prove~\eqref{eq:D+gE=convDF(x-y)}. Since $K_x=\{x-y:y\in\Phi_E(x)\}$ is compact, Lemma~\ref{lem:uniform-expansion} gives, for every sufficiently small $h\in X$ and every $y\in\Phi_E(x)$,
    \[
        g_E(x+h)-g_E(x)\le F(x-y+h)-F(x-y)\le DF(x-y)(h)+\omega_{K_x}(\enorm{h})\enorm{h}.
    \]
    Equivalence of the two norms implies that $DF(x-y)\in D^+g_E(x)$. Since $D^+g_E(x)$ is convex, as follows directly from its defining inequality, we obtain the inclusion $\supseteq$ in~\eqref{eq:D+gE=convDF(x-y)}. Now fix $h\in X$. Replacing the displacement by $th$, for $t>0$ small enough, and dividing by $t$, we get
    \[
    \limsup_{t\downarrow0}\frac{g_E(x+th)-g_E(x)}{t}\le\min_{y\in\Phi_E(x)}DF(x-y)(h).
    \]
    
    Choose a sequence $t_j\downarrow0$ along which the difference quotient converges to its $\liminf$ as $t\downarrow0$. For each $j$, choose $y_j\in\Phi_E(x+t_jh)$. Since $\norm{y_j-x}\le d_E(x+t_jh)+t_j\norm{h}$, the sequence $(y_j)$ is bounded. After passing to a subsequence, we may therefore assume that $y_j\to y_0\in E$. Passing to the limit in $\norm{x+t_jh-y_j}=d_E(x+t_jh)$ gives $y_0\in\Phi_E(x)$. The set $L$ consisting of the vectors $x-y_j$, together with their limit $x-y_0$, is compact. Applying Lemma~\ref{lem:uniform-expansion} on $L$ gives
    \[
        g_E(x+t_jh)-g_E(x)\ge F(x+t_jh-y_j)-F(x-y_j)\ge t_jDF(x-y_j)(h)-\omega_L(\enorm{t_jh})\enorm{t_jh}.
    \]
    By continuity of $DF$, dividing by $t_j$ and letting $j\to\infty$ yields
    \[
        \liminf_{t\downarrow0}\frac{g_E(x+th)-g_E(x)}{t}
        \ge DF(x-y_0)(h)
        \ge\min_{y\in\Phi_E(x)}DF(x-y)(h).
    \]
    We have therefore proved that, for every $h\in X$,
    \[
        g_E'(x;h)=\lim_{t\downarrow0}\frac{g_E(x+th)-g_E(x)}{t}
        =\min_{y\in\Phi_E(x)}DF(x-y)(h)=\min_{q\in C}q(h),
    \]
    where $C$ is the set on the right-hand side of~\eqref{eq:D+gE=convDF(x-y)}. Suppose, toward a contradiction, that $p\in D^+g_E(x)\setminus C$. For every $h\ne0$, the definition of the superdifferential gives
    \[
        \frac{g_E'(x;h)-p(h)}{\norm{h}}
        =\lim_{t\downarrow0}\frac{g_E(x+th)-g_E(x)-p(th)}{\norm{th}}
        \le0.
    \]
    Hence $p(h)\ge\min_{q\in C}q(h)$ for every $h\in X$. Since $\Phi_E(x)$ is compact, $DF$ is continuous, and $X^*$ is finite-dimensional, the set $C$ is compact and convex. Strict separation yields $\Lambda\in X^{**}$ such that $\Lambda(p)<\min_{q\in C}\Lambda(q)$. By finite dimensionality, $\Lambda(q)=q(h)$ for some $h\in X\setminus\{0\}$ and every $q\in X^*$. Hence $p(h)<\min_{q\in C}q(h)$, contradicting the inequality established above. This proves $\subseteq$ in~\eqref{eq:D+gE=convDF(x-y)}.
\end{proof}

If $E=X$, all the medial axes are empty and the rectifiability conclusion is immediate. Otherwise, choose a countable cover $(U_\ell)_{\ell\ge1}$ of $X\setminus E$ by open balls whose closures are contained in $X\setminus E$. We now apply Theorem~\ref{thm:AAC}. On each $U_\ell$, the function $-g_E$ is semiconvex by applying Lemma~\ref{lem:gEconcave} to $\overline{U_\ell}$, and is Lipschitz because $d_E$ is Lipschitz and bounded there. Since $D^-(-g_E)=-D^+g_E$, Proposition~\ref{prop:gEconv} and Theorem~\ref{thm:AAC}, applied with $m=k-1$, show that $M_k^J(E)\cap U_\ell$ is $(n-k+1)$-rectifiable for every $1\le k\le n+1$. Taking the countable union over $\ell$ proves the rectifiability conclusion of Theorem~\ref{thm:intro-dual}.

\section{Equivalence of the ordinary and dual medial axes for \texorpdfstring{$k=2,3$}{k=2,3}}\label{sec:ordinary}

In this section, we prove Theorem~\ref{thm:intro-low-rank}. Theorem~\ref{thm:intro-dual} yields a rectifiability result for the ordinary $k$-medial axis whenever affine independence of contact vectors implies affine independence of the corresponding contact covectors. We formalize this property as follows.

\begin{definition}[Rank-preservation property]
    For $2\le k\le n+1$, a norm has \emph{property $\mathrm{RP}_k$} if, for every $a_1,\ldots,a_k\in\Sp_X$,
    \[
        \dim\aff\{a_1,\ldots,a_k\}=k-1
        \quad\Longrightarrow\quad
        \dim\aff\{J(a_1),\ldots,J(a_k)\}=k-1.
    \]
\end{definition}

\begin{corollary}
    If a norm has property $\mathrm{RP}_k$, then, for every nonempty closed set $E\subseteq X$,
    \[
        M_k(E)\subseteq M_k^J(E).
    \]
    Consequently, $M_k(E)$ is $(n-k+1)$-rectifiable.
\end{corollary}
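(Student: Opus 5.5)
The plan is to unwind the two definitions and then invoke Theorem~\ref{thm:intro-dual}. Let $x\in M_k(E)$. By Definition~\ref{def:medial-axes}, $x\notin E$ and $\dim\aff U_E(x)\ge k-1$. Since the affine dimension of a set equals the maximal number of affinely independent points it contains minus one, we can pick $a_1,\ldots,a_k\in U_E(x)\subseteq\Sp_X$ that are affinely independent, i.e.\ $\dim\aff\{a_1,\ldots,a_k\}=k-1$. If $U_E(x)$ has affine dimension larger than $k-1$, we still take just $k$ affinely independent points; any affinely independent subfamily of an affinely spanning family works.

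Property $\mathrm{RP}_k$ then gives $\dim\aff\{J(a_1),\ldots,J(a_k)\}=k-1$. Since $J(a_i)\in J(U_E(x))=N_E(x)$, monotonicity of affine dimension under inclusion yields $\dim\aff N_E(x)\ge k-1$. Hence $x\in M_k^J(E)$, which proves $M_k(E)\subseteq M_k^J(E)$.

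For the consequence, Theorem~\ref{thm:intro-dual} shows that $M_k^J(E)$ is $(n-k+1)$-rectifiable. Rectifiability is inherited by subsets directly from the definition: the same Lipschitz maps $f_i$ cover the subset up to an $\mathcal H^{n-k+1}$-null set, since
\[
    M_k(E)\setminus\bigcup_i f_i(\R^{n-k+1})\subseteq M_k^J(E)\setminus\bigcup_i f_i(\R^{n-k+1}).
\]
So $M_k(E)$ is $(n-k+1)$-rectifiable.

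There is no real obstacle. The only point needing care is the extraction of exactly $k$ affinely independent points from a set of affine dimension at least $k-1$. This is standard: choose $b\in U_E(x)$ and a basis of $\spann(U_E(x)-b)$ consisting of differences $c-b$ with $c\in U_E(x)$, then keep $b$ together with $k-1$ of these points $c$.
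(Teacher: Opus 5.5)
Your proposal is correct and follows essentially the same route as the paper: extract $k$ affinely independent contact vectors from $U_E(x)$, apply $\mathrm{RP}_k$ to place $x$ in $M_k^J(E)$, then invoke Theorem~\ref{thm:intro-dual} together with the fact that subsets of rectifiable sets are rectifiable. Your proposal also spells out the extraction of $k$ affinely independent points and the subset-inheritance of rectifiability, which the paper takes for granted.
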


\begin{proof}
    If $x\in M_k(E)$, then $U_E(x)$ contains $k$ affinely independent contact vectors. Property $\mathrm{RP}_k$ implies that their images under $J$ are affinely independent, and hence $x\in M_k^J(E)$. The rectifiability conclusion follows from Theorem~\ref{thm:intro-dual}, since every subset of an $(n-k+1)$-rectifiable set is $(n-k+1)$-rectifiable.
\end{proof}

As observed after Definition~\ref{def:medial-axes} above, when the norm is induced by an inner product, the duality map preserves affine dimension, so $\mathrm{RP}_k$ holds for every $k$ and $M_k^J(E)=M_k(E)$. We now show that, under strict convexity, the same conclusion still holds for the first two nontrivial ranks $k=2$ and $k=3$.

\subsection{The \texorpdfstring{$2$}{2}-medial axis \texorpdfstring{$M_2(E)$}{M2(E)}}

For $k=2$, affine independence simply means that the two vectors are distinct. Thus the relevant property of the duality map is injectivity.

\begin{lemma}\label{lem:J-injective}
    If the norm is $C^1$ on $X\setminus\{0\}$ and strictly convex, then $J:\Sp_X\to\Sp_{X^*}$ is injective.
\end{lemma}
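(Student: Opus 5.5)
Suppose $u,v\in\Sp_X$ satisfy $J(u)=J(v)=f$; we aim to show $u=v$. By Lemma~\ref{lem:duality-identities}, $f$ is a norming functional for both vectors: $\dualnorm{f}=1$ and $f(u)=f(v)=1$, since $J(u)(u)=1$ and $J(v)(v)=J(u)(v)=1$. The plan is therefore to reduce injectivity to the statement that a single norming functional cannot norm two distinct unit vectors in a strictly convex space.

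The key step is to consider the midpoint $m=(u+v)/2$. On one hand, $\norm{m}\le(\norm{u}+\norm{v})/2=1$ by the triangle inequality. On the other hand, $f(m)=(f(u)+f(v))/2=1$, and $f(m)\le\dualnorm{f}\norm{m}=\norm{m}$, so $\norm{m}\ge1$. Hence $\norm{m}=1$, which means that the whole segment $[u,v]$ (or at least its midpoint) lies on $\Sp_X$. Strict convexity, in the form ``$\norm{u}=\norm{v}=1$ and $u\ne v$ imply $\norm{(u+v)/2}<1$'', then forces $u=v$.

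There is essentially no obstacle. The only point requiring care is to use the definition of strict convexity in a form equivalent to the midpoint inequality, namely that the unit sphere contains no nondegenerate segment. If the paper's convention is instead the segment formulation, one notes that $f\equiv1$ on $[u,v]$ by linearity, so the same argument gives $\norm{w}=1$ for every $w\in[u,v]$, and this contradicts strict convexity unless $u=v$. The $C^1$ hypothesis is used only to ensure that $J$ is defined and coincides with the unique norming functional.
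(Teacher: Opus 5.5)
Your proposal is correct and is essentially the paper's own proof. The paper likewise uses Lemma~\ref{lem:duality-identities} to get $f(u)=f(v)=1$, deduces $1=f\bigl((u+v)/2\bigr)\le\norm{(u+v)/2}\le1$, and concludes $u=v$ from the midpoint form of strict convexity.
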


\begin{proof}
    Suppose that $J(u)=J(v)=f$ for $u,v\in\Sp_X$. By Lemma~\ref{lem:duality-identities}, $f(u)=f(v)=1$, and hence
    \[
        1=f\left(\frac{u+v}{2}\right)\le\norm*{\frac{u+v}{2}}\le1.
    \]
    If $u\ne v$, strict convexity gives $\norm{(u+v)/2}<1$, a contradiction. Therefore $u=v$.
\end{proof}

\begin{corollary}\label{cor:M2}
    If the norm is $C^1$ on $X\setminus\{0\}$ and strictly convex, then for every nonempty closed $E\subseteq X$,
    \[
        M_2(E)=M_2^J(E).
    \]
    In particular, $M_2(E)$ is $(n-1)$-rectifiable.
\end{corollary}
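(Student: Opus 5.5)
The plan is to prove the two inclusions separately. Both reduce, via Lemma~\ref{lem:J-injective}, to the fact that for $k=2$ affine independence of a pair is the same as distinctness. Fix $x\notin E$. By Definition~\ref{def:medial-axes}, $x\in M_2(E)$ holds exactly when $U_E(x)$ contains two distinct elements. Likewise, $x\in M_2^J(E)$ holds exactly when $N_E(x)=J(U_E(x))$ contains two distinct elements. Indeed, a set has affine dimension at least $1$ if and only if it has at least two points.

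For the inclusion $M_2(E)\subseteq M_2^J(E)$, take distinct $u_1,u_2\in U_E(x)$. Lemma~\ref{lem:J-injective}, which uses strict convexity and the $C^1$ hypothesis, gives $J(u_1)\ne J(u_2)$, so $\dim\aff N_E(x)\ge1$. Equivalently, the norm has property $\mathrm{RP}_2$, and the preceding corollary applies directly.

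For the reverse inclusion $M_2^J(E)\subseteq M_2(E)$, which holds without any convexity assumption, take distinct $J(u_1),J(u_2)\in N_E(x)$ with $u_1,u_2\in U_E(x)$. Since $J$ is a function, $u_1\ne u_2$, so $\dim\aff U_E(x)\ge1$.

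Combining the two inclusions gives $M_2(E)=M_2^J(E)$. Theorem~\ref{thm:intro-dual} with $k=2$ then shows that $M_2(E)$ is $(n-1)$-rectifiable. I expect no real obstacle here: the only substantive input is the injectivity of $J$, and that is already established in Lemma~\ref{lem:J-injective}.
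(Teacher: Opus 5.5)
Your proposal is correct and follows the paper's proof essentially verbatim. You get the forward inclusion from the injectivity of $J$ in Lemma~\ref{lem:J-injective}, the reverse inclusion from the fact that $J$ is a function, and the rectifiability from Theorem~\ref{thm:intro-dual} with $k=2$.
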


\begin{proof}
    If $x\in M_2(E)$, then $U_E(x)$ contains two distinct contact vectors, whose images under $J$ are distinct by Lemma~\ref{lem:J-injective}. Thus $x\in M_2^J(E)$. Conversely, if $x\in M_2^J(E)$, then $N_E(x)$ contains two distinct contact covectors. Since each contact covector is the image under $J$ of a contact vector, these two covectors must arise from two distinct contact vectors. Hence $x\in M_2(E)$. The rectifiability conclusion follows from Theorem~\ref{thm:intro-dual}.
\end{proof}

Strict convexity cannot be omitted, even if the norm remains $C^1$ away from the origin. We give a simple planar example. Let
\[
    B=([-1,1]\times\{0\})+\overline B_{\enorm{\cdot}}(0,1)\subseteq\R^2.
\]
The set $B$ is a centrally symmetric compact convex body with the origin in its interior, so its Minkowski functional
\[
    \norm{x}_B=\inf\{\lambda>0:x\in\lambda B\}
\]
is a norm on $\R^2$ with closed unit ball $B$. The boundary $\partial B$ is the stadium consisting of two horizontal segments joined by two semicircles. The tangent directions agree at the four junction points, so $\partial B$ is a $C^1$ embedded curve. It follows from~\cite[Proposition~13.14]{KrieglMichor1997} that $\norm{\cdot}_B$ is of class $C^1$ on $\R^2\setminus\{0\}$.

For $|r|<1$, the point $(r,-1)$ lies in the interior of the lower horizontal segment of $\partial B$. Hence
$\norm{(r,-1)}_B=1$, and its unique norming functional is $(0,-1)$, under the standard identification of $(\R^2)^*$ with $\R^2$. Therefore, $J(r,-1)=(0,-1)$ for every $|r|<1$. Now let $E=\{(-1,0),(1,0)\}$ and take $x=(a,b)$ with $b>|a|+1$. For either $e=(\pm1,0)\in E$,
\[
    e-x=b\left(\frac{\pm1-a}{b},-1\right)
    \qquad\text{and}\qquad
    \left|\frac{\pm1-a}{b}\right|<1.
\]
It follows that $\norm{e-x}_B=b$ for both points $e\in E$. Since $E$ consists of exactly these two points, they form the whole nearest-point set $\Phi_E(x)$. Therefore $U_E(x)$ consists of two distinct contact vectors, whereas
$N_E(x)=\{(0,-1)\}$. Consequently,
\[
    x\in M_2(E)\setminus M_2^J(E).
\]
Since this holds throughout the nonempty open region $\{(a,b):b>|a|+1\}$, the set $M_2(E)$ has nonempty interior and is therefore not $1$-rectifiable.

\subsection{The \texorpdfstring{$3$}{3}-medial axis \texorpdfstring{$M_3(E)$}{M3(E)}}

For $k=3$, strict convexity of both the primal and dual unit spheres is the key point. We first recall the standard duality fact needed below.

\begin{lemma}\label{lem:dual-strictly-convex}
    If the norm of $X$ is $C^1$ on $X\setminus\{0\}$, then the dual norm on $X^*$ is strictly convex.
\end{lemma}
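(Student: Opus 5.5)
The plan is to argue by contradiction. Suppose that $f,g\in\Sp_{X^*}$ are distinct and that $\dualnorm{(f+g)/2}=1$. Then $h=(f+g)/2$ lies in $\Sp_{X^*}$, and the whole segment $[f,g]$ lies in $\Sp_{X^*}$: by convexity of the dual norm each point of the segment has norm at most $1$, and a point of norm less than $1$ would force $\dualnorm{h}<1$. Since $X$ is finite-dimensional and $\Sp_X$ is compact, there is some $u\in\Sp_X$ with $h(u)=1$.

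The key step is to show that $f(u)=g(u)=1$. We know $f(u)\le\dualnorm{f}\norm{u}=1$ and likewise $g(u)\le1$, while $\tfrac12\bigl(f(u)+g(u)\bigr)=h(u)=1$. Equality must therefore hold in both inequalities, so $f(u)=g(u)=1$.

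Now $f$ and $g$ are both norming functionals of $u$, because each has dual norm $1$ and takes the value $1$ at $u$. Lemma~\ref{lem:duality-identities} says that the norming functional of $u$ is unique and equals $J(u)$, which gives $f=J(u)=g$. This contradicts $f\ne g$. Hence every open segment between distinct points of $\Sp_{X^*}$ lies in the open unit ball, which is exactly strict convexity of the dual norm.

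No step is a genuine obstacle. The only point needing care is the attainment of the dual norm of $h$ at some unit vector $u$. This holds by compactness of $\Sp_X$ in finite dimensions, which is precisely where the finite-dimensional setting is used; $C^1$ regularity enters only through the uniqueness of norming functionals in Lemma~\ref{lem:duality-identities}.
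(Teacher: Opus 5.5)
Your proposal is correct and follows essentially the same argument as the paper. You both take a norm-attaining $u\in\Sp_X$ for $(f+g)/2$, deduce $f(u)=g(u)=1$, and conclude $f=g$ from the uniqueness of norming functionals in Lemma~\ref{lem:duality-identities}; your preliminary remark about the whole segment lying in $\Sp_{X^*}$ is harmless but not needed.
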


\begin{proof}
    Suppose that $f,g\in\Sp_{X^*}$ satisfy $\dualnorm{(f+g)/2}=1$. Since $X$ is finite-dimensional, $(f+g)/2$ attains its norm at some $u\in\Sp_X$. Replacing $u$ by $-u$ if necessary, we may assume that $f(u)+g(u)=2$. Since $f(u)\le1$ and $g(u)\le1$, we obtain $f(u)=g(u)=1$. Thus $f$ and $g$ are both norming functionals of $u$. By Lemma~\ref{lem:duality-identities}, the norming functional of $u$ is unique, and hence $f=g$.
\end{proof}

\begin{corollary}\label{cor:M3}
    Let $X$ be of dimension at least two and have a norm that is $C^1$ on $X\setminus\{0\}$ and strictly convex. For every nonempty closed $E\subseteq X$,
    \[
        M_3(E)=M_3^J(E).
    \]
    Consequently, $M_3(E)$ is $(n-2)$-rectifiable.
\end{corollary}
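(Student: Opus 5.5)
We prove the two inclusions separately; in both directions the task reduces to statements about three unit vectors and their images under $J$. Recall that three points are affinely independent iff they are distinct and not collinear.

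\emph{Inclusion $M_3(E)\subseteq M_3^J(E)$, i.e.\ property $\mathrm{RP}_3$.} Let $a_1,a_2,a_3\in\Sp_X$ be affinely independent. By Lemma~\ref{lem:J-injective}, the covectors $f_i=J(a_i)$ are distinct. Suppose toward a contradiction that they are collinear. Reorder so that $f_2$ lies strictly between the others, i.e.\ $f_2=\lambda f_1+(1-\lambda)f_3$ with $\lambda\in(0,1)$. Since $f_1,f_3,f_2$ all lie on $\Sp_{X^*}$ and the segment $[f_1,f_3]$ lies in the dual unit ball, strict convexity of the dual norm (Lemma~\ref{lem:dual-strictly-convex}) gives $\dualnorm{f_2}<1$. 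This contradicts $f_2\in\Sp_{X^*}$. Hence three distinct points on $\Sp_{X^*}$ can never be collinear, and $f_1,f_2,f_3$ are affinely independent.

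\emph{Inclusion $M_3^J(E)\subseteq M_3(E)$.} Let $x\in M_3^J(E)$. Then there are three affinely independent covectors $J(u_1),J(u_2),J(u_3)$ with $u_i\in U_E(x)$. These covectors are distinct, so $u_1,u_2,u_3$ are distinct. The points $u_i$ lie on $\Sp_X$, which is strictly convex, so the same argument as above (now applied in $X$) shows they are not collinear. Hence $U_E(x)$ has affine dimension at least $2$, and $x\in M_3(E)$.

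Rectifiability then follows from Theorem~\ref{thm:intro-dual} with $k=3$; the hypothesis $n\ge2$ ensures $3\le n+1$.

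The only delicate point is making sure collinearity of three distinct sphere points really forces the middle one into the interior of the ball. This holds because the open segment between two distinct points of a strictly convex sphere lies in the open ball. In any collinear triple of distinct points, one point lies strictly between the other two, which gives exactly the contradiction used above.
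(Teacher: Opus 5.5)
Your proof is correct and follows essentially the same route as the paper. You use injectivity of $J$ together with strict convexity of the dual sphere (Lemma~\ref{lem:dual-strictly-convex}) for $M_3(E)\subseteq M_3^J(E)$, and distinctness of the preimages together with strict convexity of the primal sphere for the reverse inclusion, with rectifiability then coming from Theorem~\ref{thm:intro-dual} at $k=3$.
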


\begin{proof}
    In a strictly convex normed space, three distinct points of the unit sphere are necessarily affinely independent. Indeed, if three distinct points were collinear, one of them would lie in the relative interior of the segment joining the other two, and strict convexity would force its norm to be strictly less than $1$.

    Thus, $x\in M_3(E)$ if and only if $U_E(x)$ contains at least three distinct contact vectors. If $x\in M_3(E)$, Lemma~\ref{lem:J-injective} shows that these vectors have three distinct images under $J$; since the dual unit sphere is strictly convex by Lemma~\ref{lem:dual-strictly-convex}, these contact covectors are affinely independent. Hence $x\in M_3^J(E)$.

    Conversely, if $x\in M_3^J(E)$, then $N_E(x)=J(U_E(x))$ contains three distinct contact covectors. They arise from three distinct contact vectors, which are affinely independent by strict convexity of the primal unit sphere. Hence $x\in M_3(E)$.
\end{proof}

\begin{corollary}\label{cor:plane}
    Let $X$ be a two-dimensional normed vector space with a norm that is $C^1$ on $X\setminus\{0\}$ and strictly convex. Let $E\subseteq X$ be nonempty and closed. Then $M_2(E)$ is $1$-rectifiable and $M_3(E)$ is countable. Thus the full conclusion of Theorem~\ref{thm:liang} remains valid.
\end{corollary}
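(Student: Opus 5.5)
This follows by specializing the two preceding corollaries to $n=2$; nothing new about the norm is needed. Since $X$ is two-dimensional with a norm that is strictly convex and $C^1$ on $X\setminus\{0\}$, both Corollary~\ref{cor:M2} and Corollary~\ref{cor:M3} apply. Corollary~\ref{cor:M2} gives $M_2(E)=M_2^J(E)$, and Theorem~\ref{thm:intro-dual} with $n=2$, $k=2$ then shows that $M_2(E)$ is $(n-1)=1$-rectifiable. Likewise, Corollary~\ref{cor:M3} (which requires $\dim X\ge2$, satisfied here) gives $M_3(E)=M_3^J(E)$. Theorem~\ref{thm:intro-dual} with $k=3=n+1$ then shows that $M_3(E)$ is $0$-rectifiable.

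The remaining step is to turn $0$-rectifiability into countability. I would use the convention stated after the definition of rectifiability: $0$-rectifiability is equivalent to countability. To justify it directly, note that $\mathcal H^0$ is counting measure and each Lipschitz image $f_i(\R^0)$ is a single point. Hence $\mathcal H^0\bigl(A\setminus\bigcup_i f_i(\R^0)\bigr)=0$ forces $A$ to lie in a countable set.

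Finally, the case $k=1$ needs no argument: $M_1(E)=X\setminus E$ is trivially $2$-rectifiable. With the cases $k=1,2,3$ established, every $1\le k\le n+1=3$ is covered, so the full conclusion of Theorem~\ref{thm:liang} holds in this planar setting. There is no real obstacle here. The only point to check is that the hypotheses of Corollary~\ref{cor:M3}, namely dimension at least two together with strict convexity and $C^1$ regularity, are met, and they are.
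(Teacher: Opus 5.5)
Your proposal is correct and matches the paper's argument: the corollary is obtained by specializing Corollaries~\ref{cor:M2} and~\ref{cor:M3} to $n=2$, then using the equivalence of $0$-rectifiability with countability noted in the preliminaries. Your direct justification of that equivalence via counting measure and your handling of the trivial case $k=1$ are both fine.
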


\begin{proof}[Proof of Theorem~\ref{thm:intro-low-rank}]
    The result follows directly from Corollaries~\ref{cor:M2} and~\ref{cor:M3}; the two-dimensional conclusion is Corollary~\ref{cor:plane}.
\end{proof}

\section{Smooth examples of affine rank loss}\label{sec:smoothexamples}

Throughout this section, $\enorm{\cdot}$, $\cdot$, and $\nabla$ refer to the standard Euclidean norm, inner product, and gradient in the displayed coordinates on $\R^n$.

We treat the case $(n,k)=(3,4)$ in full detail, because it displays the rank-loss mechanism in its simplest geometric form. We prove Theorem~\ref{thm:intro-smoothexample} by constructing a strictly convex smooth norm on $\R^3$, whose unit sphere has positive Euclidean principal curvatures and is arbitrarily $C^\infty$-close to the Euclidean unit sphere, together with a four-point set $E$ whose $4$-medial axis contains a nondegenerate line segment. In the final subsection, we explain how to extend the construction to every $n\ge3$ and $4\le k\le n+1$.

The homothetic-tetrahedron mechanism underlying the construction appears in Icking, Klein, L\^e, and Ma~\cite[Section~5]{Icking1995}, who describe a continuum of such tetrahedra before modifying their construction to obtain finitely many centers stable under perturbations of the sites. Here we give a localized $C^\infty$ realization of this mechanism, retaining positive curvature and arbitrary $C^\infty$ proximity to the Euclidean sphere, and identify the corresponding loss of affine dimension of the contact covectors.

Along the resulting segment of centers, the four contact vectors are affinely independent, whereas their contact covectors have affine dimension exactly $2$. Thus the segment lies in $M_4(E)\setminus M_4^J(E)$, exhibiting a failure of the ordinary rectifiability conclusion while remaining consistent with Theorem~\ref{thm:intro-dual}.

\subsection{Construction of the unit ball}

Fix constants $0<a<b<1$ and choose a smooth function $\chi:\R\to\R$ such that
\[
    \supp\chi\subseteq(0,1),\qquad
    0\le\chi\le1,\qquad
    \chi=1\text{ on }\left[\frac a2,\frac{b+1}{2}\right],
\]
where $\supp\chi$ denotes the closed support of $\chi$. For $w=(x,y)\in\R^2$, set $r=\enorm{w}=\sqrt{x^2+y^2}$ and define
\[
    \psi(x,y)=
    \begin{cases}
        \displaystyle \chi(r)\frac{y^2}{r}&\text{if }r>0,\\[1ex]
        0&\text{if }r=0.
    \end{cases}
\]
Since $\chi$ vanishes in a neighborhood of $0$, the function $\psi$ is $C^\infty$ on $\R^2$. Moreover, $\psi$ vanishes in a neighborhood of the Euclidean unit circle. For every $\eta\in\R$ and every $(x,y)$ in the closed unit disk, set
\[
    \varphi_\eta(x,y)=-\sqrt{1-x^2-y^2}+\eta\psi(x,y),
\]
and define
\begin{equation}\label{eq:B-eta}
    B_\eta=\left\{(x,y,z)\in\R^3:x^2+y^2\le1,\ \varphi_\eta(x,y)\le z\le-\varphi_\eta(x,y)\right\}.
\end{equation}

\begin{proposition}\label{prop:unit-ball}
    For all sufficiently small $|\eta|$, the set $B_\eta$ is the closed unit ball of a strictly convex norm $\norm{\cdot}_\eta$ on $\R^3$ with the following properties:
    \begin{enumerate}[label=\textup{(\roman*)}]
        \item $\partial B_\eta$ is $C^\infty$ and has positive Euclidean principal curvatures; consequently, $\norm{\cdot}_\eta$ is $C^\infty$ on $\R^3\setminus\{0\}$;
        \item if $\rho_\eta:\Sp^2\to(0,\infty)$ is the radial function of $B_\eta$, so that $\partial B_\eta=\{\rho_\eta(\theta)\theta:\theta\in\Sp^2\}$, then $\rho_\eta\to1$ in $C^\infty(\Sp^2)$ as $\eta\to0$. Equivalently, the radial parametrizations $\theta\mapsto\rho_\eta(\theta)\theta$ converge in $C^\infty(\Sp^2;\R^3)$ to the standard inclusion $\theta\mapsto\theta$.
    \end{enumerate}
\end{proposition}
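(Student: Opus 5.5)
Since $\psi$ vanishes on a neighborhood of the unit circle (its support lies in $\{r<1\}$ because $\supp\chi\subseteq(0,1)$), the plan is to treat $\partial B_\eta$ as a compactly supported perturbation of $\Sp^2$ and then read everything off from the radial function. Near the equator $\{r\ge r_0\}$, with $r_0<1$ chosen so that $\supp\chi\subseteq(0,r_0)$, we have $B_\eta\cap\{r\ge r_0\}=\overline B(0,1)\cap\{r\ge r_0\}$ for every $\eta$. On $\{r\le r_1\}$, for some $r_0<r_1<1$, the boundary is the union of the two graphs $z=\pm(\sqrt{1-r^2}-\eta\psi)$. These graphs are smooth there because $\sqrt{1-r^2}$ is smooth for $r<1$. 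Hence $\partial B_\eta$ is a smooth closed surface agreeing with $\Sp^2$ outside a compact subset of the open upper and lower caps.

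Next I will show that the radial function converges in $C^\infty$. For $\theta$ near the equator, $\rho_\eta(\theta)=1$. On the upper cap, $\rho_\eta(\theta)$ is the unique solution $\rho$ of
\[
G(\rho,\theta,\eta)=\rho\theta_3-\sqrt{1-\rho^2(\theta_1^2+\theta_2^2)}+\eta\psi(\rho\theta_1,\rho\theta_2)=0,
\]
and the lower cap is symmetric. At $\eta=0$ the solution is $\rho=1$. There $\partial_\rho G=\theta_3+(\theta_1^2+\theta_2^2)/\sqrt{1-\enorm{(\theta_1,\theta_2)}^2}=1/\theta_3>0$, uniformly on the compact part of the cap where $\psi$ can matter. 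The implicit function theorem, with smooth joint dependence on $(\theta,\eta)$, then gives a smooth function $\rho(\theta,\eta)$ with $\rho(\cdot,0)=1$. All derivatives of $\rho(\cdot,\eta)-1$ are therefore $O(\eta)$ uniformly on compacts, which proves (ii). One also has to check globally that each ray meets $\partial B_\eta$ exactly once. For small $\eta$ this follows because the ray is transverse to the surface, since the radial derivative is bounded away from $0$, and the surface is $C^1$-close to the sphere.

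For convexity and curvature, I will use that the principal curvatures depend continuously on the $C^2$ jet of the parametrization. Since $\theta\mapsto\rho_\eta(\theta)\theta$ converges in $C^2$ to the inclusion, the principal curvatures converge uniformly to $1$, so they are positive for small $|\eta|$. By Hadamard's theorem, a compact connected smooth surface with positive Gauss curvature bounds a strictly convex body, and this body must be $B_\eta$, because $B_\eta$ is the region enclosed, being star-shaped with respect to $0$. $B_\eta$ is centrally symmetric because $\psi(-x,-y)=\psi(x,y)$, so $\varphi_\eta$ is even and the defining inequalities in \eqref{eq:B-eta} are invariant under $(x,y,z)\mapsto-(x,y,z)$. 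It contains $0$ in its interior, since $\rho_\eta$ is close to $1$. Its Minkowski functional is therefore a norm, strictly convex because the boundary contains no segments under positive curvature. Finally, $\norm{x}_\eta=\enorm{x}/\rho_\eta(x/\enorm{x})$ is $C^\infty$ on $\R^3\setminus\{0\}$, since $\rho_\eta$ is smooth and positive.

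The main obstacle is making the implicit-function step uniform and global: handling the junction between the cap description and the equatorial region, and showing that there is exactly one boundary point per ray. I would handle this by working on the fixed compact set $\{\theta_3\ge\delta\}$ containing the support of the perturbation in radial coordinates, where $\partial_\rho G\ge\delta$, and noting that everything outside this set is untouched.
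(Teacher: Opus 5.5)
Your proposal is correct, but it runs in the reverse order from the paper's proof.

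The paper starts from the explicit estimate $D^2\varphi_0\ge I_2$ on the open disk, so $D^2\varphi_\eta\ge I_2/2$ for small $|\eta|$. Everything else follows from that bound:
\begin{enumerate}
\item Convexity of $B_\eta$ is immediate, as the intersection of the epigraph of $\varphi_\eta$ with the hypograph of $-\varphi_\eta$.
\item Positive curvature is read off the graph formula $\mathrm{II}=D^2\varphi_\eta/\sqrt{1+\enorm{\nabla\varphi_\eta}^2}$.
\item Smoothness of the norm is quoted from Kriegl--Michor.
\item (ii) comes last, via the implicit function theorem. The radial function is automatically well defined there, because $B_\eta$ is already known to be a convex body with $0$ in its interior.
\end{enumerate}

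You instead prove the radial description and (ii) first. You then obtain positive curvature softly from $C^2$-closeness to $\Sp^2$, and recover convexity from Hadamard's ovaloid theorem. The formula $\norm{x}_\eta=\enorm{x}/\rho_\eta(x/\enorm{x})$ then gives smoothness of the norm without any citation.

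The two routes buy different things. The paper's route is elementary and quantitative; the positivity of $D^2\varphi_\eta$ is reused later when the contact covectors are computed. Yours avoids any curvature computation, at the cost of a global theorem and of having to prove star-shapedness by hand, which the paper gets for free from convexity.

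On that star-shapedness step, two things in your sketch need tightening. ``Transverse and $C^1$-close'' is not by itself an argument. Also, the region $\theta_3<\delta$ is not literally untouched: those rays do cross the perturbed cylinder, though at heights where they are still inside $B_\eta$. A clean fix is as follows.
\begin{enumerate}
\item Record that $\varphi_\eta<0$ on the open disk for small $|\eta|$, so a ray with $\theta_3>0$ can only leave $B_\eta$ through the upper graph.
\item Take $r_*>0$ with $\psi=0$ on $\{r<r_*\}$, and write $\theta'=(\theta_1,\theta_2)$.
\item Where $\nabla\psi(\rho\theta')\ne0$, one has $\rho\enorm{\theta'}\ge r_*$. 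Hence $\partial_\rho G\ge\theta_3+r_*\enorm{\theta'}-|\eta|\sup\enorm{\nabla\psi}\ge r_*-|\eta|\sup\enorm{\nabla\psi}$.
\item Elsewhere, $\partial_\rho G\ge\theta_3>0$.
\end{enumerate}
So $G(\cdot,\theta,\eta)$ is strictly increasing on $[0,1/\enorm{\theta'}]$, negative at $0$ and positive at the right endpoint. This gives exactly one boundary point per ray, and also yields $\rho_\eta\equiv1$ near the equator.
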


\begin{proof}
    We have $\varphi_\eta=\varphi_0+\eta\psi$, where $\varphi_0(w)=-\sqrt{1-\enorm{w}^2}$ for $w=(x,y)$ in the closed unit disk. On the one hand, we have on the open unit disk
    \[
    D^2\varphi_0(w)=\frac{1}{\sqrt{1-\enorm{w}^2}}I_2+\frac{1}{(1-\enorm{w}^2)^{3/2}}w\otimes w.
    \]
    Since $\sqrt{1-\enorm{w}^2}\le1$ and $w\otimes w$ is positive semidefinite, we have $D^2\varphi_0\ge I_2$ as quadratic forms. On the other hand, $D^2\psi$ is bounded because $\psi$ is smooth and compactly supported in the open unit disk. Hence, for all sufficiently small $|\eta|$,
    \begin{equation}\label{eq:hessian-lower-bound}
        D^2\varphi_\eta\ge\frac{I_2}{2}.
    \end{equation}
    In particular, $\varphi_\eta$ is convex on the open unit disk, and the corresponding convexity inequalities extend by continuity to the closed unit disk. Moreover, there exists $c>0$ such that $\varphi_0\le-c$ on $\supp\psi$. Since $\psi$ is bounded, shrinking $|\eta|$ further if necessary, we may therefore assume that $\varphi_\eta<0$ throughout the open unit disk.

    Since $\psi(-w)=\psi(w)$, we have $\varphi_\eta(-w)=\varphi_\eta(w)$, and the definition~\eqref{eq:B-eta} immediately gives $B_\eta=-B_\eta$. Moreover, since $\varphi_\eta$ is convex on the closed unit disk, its epigraph over the closed unit disk is convex; likewise, since $-\varphi_\eta$ is concave, its hypograph is convex. Their intersection is $B_\eta$, which is therefore convex. Since $\psi$ vanishes near the origin, $\varphi_\eta(0)=-1$, and by continuity $0\in\interior B_\eta$. Moreover, $B_\eta$ is compact. Thus $B_\eta$ is a centrally symmetric convex body containing the origin in its interior, and its Minkowski functional defines a norm $\norm{\cdot}_\eta$ on $\R^3$ having $B_\eta$ as its closed unit ball.

    We next prove~\textup{(i)}. For $\enorm{w}$ sufficiently close to $0$ or $1$, the perturbation $\eta\psi$ vanishes, so $\partial B_\eta$ agrees exactly with the Euclidean unit sphere. Elsewhere, the boundary is given by the two smooth graphs $z=\pm\varphi_\eta(w)$. Hence $\partial B_\eta$ is a $C^\infty$ submanifold of $\R^3$. Since $\partial B_\eta$ is the unit sphere of the norm $\norm{\cdot}_\eta$, Proposition~13.14 of~\cite{KrieglMichor1997} implies that $\norm{\cdot}_\eta$ is $C^\infty$ on $\R^3\setminus\{0\}$.

    It remains to prove positivity of the Euclidean principal curvatures of $\partial B_\eta$. On the lower graph parametrized by $\Gamma_\eta(w)=(w,\varphi_\eta(w))$, the outward unit normal is
    \[
        \nu(\Gamma_\eta(w))=\frac{(\nabla\varphi_\eta(w),-1)}{\sqrt{1+\enorm{\nabla\varphi_\eta(w)}^2}}.
    \]
    With our convention $\mathrm{II}_p(\xi,\zeta)=D\nu(p)[\xi]\cdot\zeta$ for $\xi,\zeta\in T_p\partial B_\eta$, we have, for $h,k\in\R^2$,
    \[
        \mathrm{II}_{\Gamma_\eta(w)}(D\Gamma_\eta(w)[h],D\Gamma_\eta(w)[k])=\frac{D^2\varphi_\eta(w)[h,k]}{\sqrt{1+\enorm{\nabla\varphi_\eta(w)}^2}}.
    \]
    Hence the estimate~\eqref{eq:hessian-lower-bound} implies that $\mathrm{II}$ is positive definite, and therefore that both principal curvatures are positive. The same conclusion holds on the upper graph by central symmetry. Finally, at points of the equator the boundary agrees locally with the Euclidean unit sphere, whose principal curvatures are both equal to $1$. Thus both Euclidean principal curvatures of $\partial B_\eta$ are positive everywhere.

    This also implies that $B_\eta$ is strictly convex. Indeed, otherwise $\partial B_\eta$ would contain a nondegenerate line segment. If $p$ is an interior point of such a segment and $\xi\ne0$ is its direction, then the curve $\gamma(t)=p+t\xi$ lies in $\partial B_\eta$ for $|t|$ sufficiently small. Since $\nu(\gamma(t))\cdot\xi=0$, differentiation at $t=0$ gives
    \[
        \mathrm{II}_p(\xi,\xi)=D\nu(p)[\xi]\cdot\xi=0,
    \]
    contradicting the positive definiteness of $\mathrm{II}$.

    Finally, we prove~\textup{(ii)}. For $\eta=0$, the set $B_0$ is the Euclidean closed unit ball, so $\rho_0\equiv1$. Since $\supp\chi\subseteq(0,1)$, there exist $0<r_0<r_1<1$ such that $\psi(w)=0$ whenever $\enorm{w}\le r_0$ or $\enorm{w}\ge r_1$. Hence $\partial B_\eta$ agrees exactly with the Euclidean unit sphere near the poles and the equator. On the compact annulus $r_0\le\enorm{w}\le r_1$, we have $\varphi_\eta-\varphi_0=\eta\psi$, so $\varphi_\eta\to\varphi_0$ in $C^\infty$ as $\eta\to0$.

    It remains to show that $\rho_\eta\to1$ in $C^\infty(\Sp^2)$. On the lower hemisphere, write $\theta=(u,\theta_3)$ with $u\in\R^2$ and $\theta_3<0$. Then $\rho_\eta(\theta)$ is characterized by $G_\eta(\rho_\eta(\theta),\theta)=0$, where $G_\eta(r,\theta)=r\theta_3-\varphi_\eta(ru)$. For $\eta=0$, we have $\partial_rG_0(1,\theta)=1/\theta_3\ne0$. Hence, by the implicit function theorem applied in local coordinates on $\Sp^2$, with $\eta$ and the local coordinates of $\theta$ as parameters and $r$ as the unknown, $(\eta,\theta)\mapsto\rho_\eta(\theta)$ is locally smooth near $\eta=0$. Since this dependence is smooth and $\rho_0\equiv1$, it follows, after restricting to compactly contained coordinate neighborhoods, that $\rho_\eta\to1$ locally in $C^\infty$ as $\eta\to0$. The same argument applies on the upper hemisphere, while near the poles and the equator one has $\rho_\eta\equiv1$. By compactness of $\Sp^2$, finitely many such neighborhoods suffice, which proves~\textup{(ii)}.
\end{proof}

\subsection{A one-parameter family of homothetic tetrahedra}

We now construct the four-point set and exhibit a one-parameter family of centers belonging to its $4$-medial axis. Assume that $|\eta|>0$ is small enough that Proposition~\ref{prop:unit-ball} applies. In the following construction, the notation $d_E$, $\Phi_E$, $U_E$, $N_E$, $J$, $M_k(E)$, and $M_k^J(E)$ refers to the norm $\norm{\cdot}_\eta$.

Set $s(t)=\sqrt{1-t^2}$ for $t\in[a,b]$, and define
\begin{align*}
    \alpha_t&=(t,0,-s(t)),& \beta_t&=(-t,0,-s(t)),\\
    \gamma_t&=(0,t,-s(t)+\eta t),& \delta_t&=(0,-t,-s(t)+\eta t).
\end{align*}
Since $[a,b]\subseteq[a/2,(b+1)/2]$, we have $\chi(t)=1$ for $t\in[a,b]$, and therefore $\varphi_\eta(\pm t,0)=-s(t)$ and $\varphi_\eta(0,\pm t)=-s(t)+\eta t$. Thus $\alpha_t,\beta_t,\gamma_t,\delta_t\in\partial B_\eta$. These four points are affinely independent, since
\begin{equation}\label{eq:determinant}
    \det(\beta_t-\alpha_t,\gamma_t-\alpha_t,\delta_t-\alpha_t)=-4\eta t^3\ne0.
\end{equation}

Fix $t_0\in(a,b)$ and set
\begin{equation}\label{eq:E}
    E=\{\alpha_{t_0},\beta_{t_0},\gamma_{t_0},\delta_{t_0}\}.
\end{equation}
For $t\in[a,b]$, let $\lambda_t=t/t_0$ and $v_t=(0,0,-s(t)+\lambda_t s(t_0))$. A direct calculation gives
\begin{equation}\label{eq:homothety}
    \{\alpha_t,\beta_t,\gamma_t,\delta_t\}=v_t+\lambda_t E.
\end{equation}
Since the four points on the left-hand side belong to $\partial B_\eta$, we have $\norm{v_t+\lambda_t e}_\eta=1$ for every $e\in E$. As $\lambda_t>0$, this is equivalent to $\norm{e-x_t}_\eta=1/\lambda_t$ for every $e\in E$, where
\begin{equation}\label{eq:center}
    x_t=-\frac{v_t}{\lambda_t}=\left(0,0,\frac{t_0}{t}s(t)-s(t_0)\right).
\end{equation}
Since $E$ consists of exactly these four points, it follows that $d_E(x_t)=1/\lambda_t$ and $\Phi_E(x_t)=E$. By~\eqref{eq:determinant} with $t=t_0$, the points of $E$ are affinely independent, and hence $x_t\in M_4(E)$ for every $t\in[a,b]$.

Finally, $(s(t)/t)'=-1/(t^2s(t))<0$. Hence the third coordinate of $x_t$ varies strictly with $t$, and the set $\{x_t:t\in[a,b]\}$ is a nondegenerate line segment on the $z$-axis.

\begin{proof}[Proof of Theorem~\ref{thm:intro-smoothexample}]
    Choose $\eta_0>0$ such that Proposition~\ref{prop:unit-ball} holds whenever $|\eta|<\eta_0$, and let $\norm{\cdot}_\eta$ be the corresponding norm with unit ball $B_\eta$. For $\eta=0$, we have $B_0$ equal to the Euclidean closed unit ball, so $\norm{\cdot}_0$ is the Euclidean norm. Proposition~\ref{prop:unit-ball} also shows that, for every $|\eta|<\eta_0$, the norm $\norm{\cdot}_\eta$ is strictly convex and $C^\infty$ away from the origin, its unit sphere has positive Euclidean principal curvatures, and $\rho_\eta\to1$ in $C^\infty(\Sp^2)$ as $\eta\to0$. Now fix $0<|\eta|<\eta_0$. With $a,b,t_0$ as above, let $E_\eta$ be the four-point set~\eqref{eq:E} corresponding to this value of $\eta$. The preceding construction shows that the nondegenerate line segment $\{x_t:t\in[a,b]\}$ is contained in $M_4(E_\eta)$. Hence $M_4(E_\eta)$ is not $0$-rectifiable.
\end{proof}

\subsection{The affine dimension of the contact covectors}

We finally verify directly that the contact covectors in the preceding example lose one affine dimension. For each $t\in[a,b]$, the four contact vectors at $x_t$ associated with the points of $E$ are precisely $\alpha_t,\beta_t,\gamma_t,\delta_t$. Indeed, by~\eqref{eq:homothety} and~\eqref{eq:center}, and since $d_E(x_t)=1/\lambda_t$, we have
\[
    \frac{e-x_t}{d_E(x_t)}=\lambda_t(e-x_t)=v_t+\lambda_t e\in\{\alpha_t,\beta_t,\gamma_t,\delta_t\}
\]
for every $e\in E$, with the corresponding choice of contact point.

Using the standard Euclidean inner product to identify $(\R^3)^*$ with $\R^3$, the covector $J(u)=D\norm{\cdot}_\eta(u)$ is represented by a normal vector to $\partial B_\eta$ at $u$, since $T_u\partial B_\eta=\ker J(u)$. Since $J(u)$ is the norming functional at $u$, this vector points outward and is uniquely determined by the normalization $J(u)(u)=1$. At a point $u=(x,y,z)$ on the lower graph $z=\varphi_\eta(x,y)$, the outward normal is proportional to
\begin{equation}\label{eq:nabla}
    \nabla\bigl(\varphi_\eta(x,y)-z\bigr)=(\partial_x\varphi_\eta,\partial_y\varphi_\eta,-1).
\end{equation}
For $t\in[a,b]$, we have $\chi(t)=1$, and in a neighborhood of the relevant points
\[
    \varphi_\eta(x,y)=-\sqrt{1-x^2-y^2}+\eta\frac{y^2}{\sqrt{x^2+y^2}}.
\]
Evaluating~\eqref{eq:nabla} at the four contact points and normalizing by $J(u)(u)=1$ gives
\begin{align*}
    J(\alpha_t)&=(t,0,-s(t)),&
    J(\beta_t)&=(-t,0,-s(t)),\\
    J(\gamma_t)&=(0,t+\eta s(t),-s(t)),&
    J(\delta_t)&=(0,-t-\eta s(t),-s(t)).
\end{align*}
Thus all four contact covectors lie in the affine plane
\[
    \{(x,y,z)\in\R^3:z=-s(t)\}.
\]
Their affine dimension is exactly $2$. Indeed, positivity of $\partial_{yy}\varphi_\eta(t,0)=1/s(t)+2\eta/t$ implies $t+\eta s(t)>t/2>0$, so $J(\alpha_t),J(\beta_t),J(\gamma_t)$ are not collinear. Consequently, $x_t\notin M_4^J(E)$ for every $t\in[a,b]$. Thus, along the line segment contained in $M_4(E)$, the corresponding contact covectors fail to have affine dimension $3$, in accordance with Theorem~\ref{thm:intro-dual}.

\subsection{Extension to higher dimensions and higher orders}

The preceding construction extends to every $n\ge3$ and $4\le k\le n+1$: there exist smooth strictly convex norms on $\R^n$ whose unit spheres have positive-definite second fundamental form and are arbitrarily $C^\infty$-close to the Euclidean unit sphere, together with a $k$-point set $E$ for which $M_k(E)$ contains a smooth embedded submanifold of dimension $n-k+2$. We outline the construction.

Set $m=k-2$ and write points of $\R^n=\R^m\times\R^{n-k+1}\times\R$ as $(u,w,z)$. Define $H(u)=u_2^2/\enorm{u}$ for $u\ne0$, where $u_2$ is the second component of $u$, and enumerate the directions $e_1,\ldots,e_m,-e_1,-e_2$ as $a_1,\ldots,a_k$, where $e_1,\ldots,e_m$ is the standard basis of $\R^m$. For every $\eta\ne0$, the lifted points $(a_i,\eta H(a_i))$ are affinely independent. Fix $t_0\in(0,1)$ and choose a smooth cutoff $\zeta(u,w)$, invariant under $(u,w)\mapsto(-u,-w)$, compactly supported in $\{(u,w):\enorm{u}^2+\enorm{w}^2<1,\ u\ne0\}$, and equal to one near each $(t_0a_i,0)$. Now extend $\zeta H$ by zero across $u=0$, replace the lower Euclidean hemisphere by
\[
    \varphi_\eta(u,w)=-\sqrt{1-\enorm{u}^2-\enorm{w}^2}+\eta\zeta(u,w)H(u),
\]
and define the body between the graphs of $\varphi_\eta$ and $-\varphi_\eta$. For all sufficiently small $|\eta|$, the same arguments as in the proof of Proposition~\ref{prop:unit-ball} show that this body is the closed unit ball of a strictly convex norm, that its boundary is $C^\infty$ with positive-definite second fundamental form, and that its radial function converges to $1$ in $C^\infty$ as $\eta\to0$.

Fix $0<|\eta|$ sufficiently small. Then, for $(t,w)$ in a neighborhood of $(t_0,0)$, we have $\zeta(ta_i,w)=1$ for every $i$, and hence the perturbed unit sphere contains the points
\[
    P_i(t,w)=\bigl(ta_i,w,-s(t,w)+\eta tH(a_i)\bigr)
    \qquad\text{where}\qquad s(t,w)=\sqrt{1-t^2-\enorm{w}^2}.
\]
Define $E=\{P_i(t_0,0):1\le i\le k\}$. Positive $1$-homogeneity of $H$ shows that these configurations are homothetic to $E$, with ratio $t/t_0$. The corresponding centers are
\[
    x_{t,w}=\left(0,-\frac{t_0}{t}w,\frac{t_0}{t}s(t,w)-s(t_0,0)\right).
\]
All points of $E$ are nearest to $x_{t,w}$, and the contact vectors are the affinely independent vectors $P_i(t,w)$. Thus $x_{t,w}\in M_k(E)$. The map $(t,w)\mapsto x_{t,w}$ has rank $n-k+2$ at $(t_0,0)$. Its image, after restricting to a sufficiently small open neighborhood, is a smooth embedded submanifold of that dimension.

We compute the corresponding contact covectors by the same normal-vector argument as above. Positive $1$-homogeneity of $H$ gives $\nabla H(ta_i)=\nabla H(a_i)$ and Euler's identity $\nabla H(a_i)\cdot a_i=H(a_i)$. Normalizing by $J(P_i(t,w))(P_i(t,w))=1$ yields
\[
    J(P_i(t,w))=\bigl(ta_i+\eta s(t,w)\nabla H(a_i),\,w,\,-s(t,w)\bigr).
\]
Since their $w$- and $z$-coordinates coincide, their affine dimension equals that of their $u$-components. Now $\nabla H(\pm e_2)=\pm e_2$, whereas $\nabla H(\pm e_1)=0$ and $\nabla H(e_j)=0$ for $3\le j\le m$. These components are therefore
\[
    te_1,\quad (t+\eta s(t,w))e_2,\quad te_3,\ldots,te_m,\quad -te_1,\quad -(t+\eta s(t,w))e_2,
\]
with the terms $te_3,\ldots,te_m$ omitted when $m=2$. Since the cutoff is identically one near $(te_1,w)$, positivity of the Hessian gives $\partial_{u_2u_2}\varphi_\eta(te_1,w)=1/s(t,w)+2\eta/t>0$, and hence $t+\eta s(t,w)>t/2>0$. The affine hull of these components contains $0$, as the midpoint of $\pm te_1$, and a nonzero multiple of every coordinate vector. It is therefore $\R^m$, so the contact covectors have affine dimension exactly $m=k-2$. The resulting submanifold is thus contained in $M_{k-1}^J(E)\setminus M_k^J(E)$.

\section{Concluding remarks and open problems}\label{sec:questions}

Theorem~\ref{thm:intro-dual} gives a natural analogue of Theorem~\ref{thm:liang} in arbitrary finite-dimensional normed spaces whose norm is $C^1$ on $X\setminus\{0\}$, with the ordinary medial axis $M_k(E)$ replaced by the dual medial axis $M_k^J(E)$, defined through the affine dimension of contact covectors rather than contact vectors. Theorem~\ref{thm:intro-low-rank} shows that, for strictly convex norms that are $C^1$ on $X\setminus\{0\}$, the two notions coincide for $k=2$ and $k=3$, so Liang's rectifiability conclusion remains valid for the ordinary medial axes $M_2(E)$ and $M_3(E)$. By contrast, the construction of Section~\ref{sec:smoothexamples} shows that the corresponding rectifiability conclusion for ordinary medial axes can fail already for $k=4$ in dimension three, even for smooth positively curved unit spheres arbitrarily $C^\infty$-close to the Euclidean sphere. The extension outlined there exhibits the same rank-loss mechanism for every $n\ge3$ and $4\le k\le n+1$.

Several natural questions remain.

\begin{question}
    Let $n\ge3$. What characterizes strictly convex norms that are $C^1$ on $X\setminus\{0\}$ and satisfy property $\mathrm{RP}_k$ for a given $4\le k\le n+1$? Are there nonellipsoidal examples? Since every affinely independent collection of unit vectors extends to an affine basis of $X$ consisting of unit vectors, property $\mathrm{RP}_{n+1}$ implies $\mathrm{RP}_k$ for every $2\le k\le n+1$. Does $\mathrm{RP}_{n+1}$ characterize norms induced by inner products?
\end{question}

\begin{question}
    Let $n\ge3$ and $4\le k\le n+1$. For a strictly convex norm that is $C^1$ on $X\setminus\{0\}$, define the rank-defect locus
    \[
        \mathcal D_k=\left\{(u_1,\ldots,u_k)\in\Sp_X^k:\dim\aff\{u_1,\ldots,u_k\}=k-1,\ \dim\aff\{J(u_1),\ldots,J(u_k)\}\le k-2\right\}.
    \]
    Does $(n-k+1)$-rectifiability of $M_k(E)$ for every nonempty closed set $E$ imply $\mathrm{RP}_k$, or equivalently $\mathcal D_k=\varnothing$? If not, which properties of $\mathcal D_k$ suffice to ensure this rectifiability conclusion? Understanding the size and geometry of $\mathcal D_k$ may help distinguish rank defects that are compatible with rectifiability from those that can generate higher-dimensional subsets of ordinary medial axes.
\end{question}

\begin{question}
    Let $n\ge3$ and $4\le k\le n+1$, and suppose that a strictly convex norm that is $C^1$ on $X\setminus\{0\}$ fails property $\mathrm{RP}_k$. Which additional geometric restrictions on a nonempty closed set $E$ ensure that $M_k(E)$ is nevertheless $(n-k+1)$-rectifiable? In particular, does this conclusion hold when $E$ is the boundary of a convex body? More generally, which geometric conditions on $E$ prevent its nearest-point configurations from meeting $\mathcal D_k$ in a way that produces higher-dimensional subsets of $M_k(E)$?
\end{question}

\begin{question}
    Suppose now that the norm is not assumed to be differentiable away from the origin. For $u\in\Sp_X$, one may replace the single contact covector $J(u)$ by the subdifferential $D^-\norm{\cdot}(u)$ introduced in Section~\ref{sec:semiconcavity}. Since the norm is convex, this agrees with the usual convex subdifferential and is the exposed face of the closed dual unit ball determined by $u$:
    \[
        D^-\norm{\cdot}(u)=\{f\in\Sp_{X^*}:f(u)=1\}.
    \]
    Given a nonempty closed set $E$ and $x\notin E$, one may therefore associate with its contact vectors the set
    \[
        \bigcup_{u\in U_E(x)}D^-\norm{\cdot}(u)\subseteq\Sp_{X^*}.
    \]
    What notion of dual rank, defined in terms of these multivalued contact covectors, yields an analogue of Theorem~\ref{thm:intro-dual}? The elementary example of the infinity norm illustrates the difficulty: at a corner of the primal unit ball, a single contact vector has an entire face of supporting covectors, whereas along a flat side, distinct contact vectors share the same supporting covector. Thus this affine dimension reflects both the nonsmoothness of the norm and the nearest-point configuration. How should these two contributions enter a rectifiability theory?
\end{question}


\begin{thebibliography}{99}

    \bibitem{Alberti1992}
    G.~Alberti, L.~Ambrosio, and P.~Cannarsa,
    \newblock On the singularities of convex functions,
    \newblock \emph{Manuscripta Math.} \textbf{76} (1992), 421--435.
    \newblock \doi{10.1007/BF02567770}

    \bibitem{AzagraLeGruyerMudarra2021}
    D.~Azagra, E.~Le Gruyer, and C.~Mudarra,
    \newblock Kirszbraun's theorem via an explicit formula,
    \newblock \emph{Canad. Math. Bull.} \textbf{64} (2021), no.~1, 142--153.
    \newblock \doi{10.4153/S0008439520000314}

    \bibitem{Blum1967}
    H.~Blum,
    \newblock A transformation for extracting new descriptors of shape,
    \newblock in W.~Wathen-Dunn (ed.), \emph{Models for the Perception of Speech and Visual Form}, MIT Press, Cambridge, MA, 1967, pp.~362--380.

    \bibitem{ChewDrysdale1985}
    L.~P. Chew and R.~L.~(Scot) Drysdale III,
    \newblock Voronoi diagrams based on convex distance functions,
    \newblock in \emph{Proceedings of the First Annual Symposium on Computational Geometry}, ACM, New York, 1985, pp.~235--244.
    \newblock \doi{10.1145/323233.323264}

    \bibitem{Choi1997}
    H.~I. Choi, S.~W. Choi, and H.~P. Moon,
    \newblock Mathematical theory of medial axis transform,
    \newblock \emph{Pacific J. Math.} \textbf{181} (1997), no.~1, 57--88.
    \newblock \doi{10.2140/pjm.1997.181.57}

    \bibitem{Erdos1945}
    P.~Erd\H{o}s,
    \newblock Some remarks on the measurability of certain sets,
    \newblock \emph{Bull. Amer. Math. Soc.} \textbf{51} (1945), 728--731.
    \newblock \doi{10.1090/S0002-9904-1945-08429-8}

    \bibitem{Erdos1946}
    P.~Erd\H{o}s,
    \newblock On the Hausdorff dimension of some sets in Euclidean space,
    \newblock \emph{Bull. Amer. Math. Soc.} \textbf{52} (1946), 107--109.
    \newblock \doi{10.1090/S0002-9904-1946-08514-6}

    \bibitem{Icking1995}
    C.~Icking, R.~Klein, N.-M.~L\^e, and L.~Ma,
    \newblock Convex distance functions in $3$-space are different,
    \newblock \emph{Fund. Inform.} \textbf{22} (1995), no.~4, 331--352.
    \newblock \doi{10.3233/FI-1995-2242}

    \bibitem{Kolasinski2023}
    S.~Kolasiński and M.~Santilli,
    \newblock Regularity of the distance function from arbitrary closed sets,
    \newblock \emph{Math. Ann.} \textbf{386} (2023), 735--777.
    \newblock \doi{10.1007/s00208-022-02407-7}

    \bibitem{KrieglMichor1997}
    A.~Kriegl and P.~W.~Michor,
    \newblock \emph{The Convenient Setting of Global Analysis},
    \newblock Mathematical Surveys and Monographs, vol.~53,
    \newblock American Mathematical Society, Providence, RI, 1997.
    \newblock \doi{10.1090/surv/053}

    \bibitem{Liang2022}
    X.~Liang,
    \newblock On the dimension of the $k$-medial axis for an arbitrary closed set,
    \newblock \emph{J. Anal. Math.} \textbf{147} (2022), 29--39.
    \newblock \doi{10.1007/s11854-022-0202-y}

    \bibitem{Lieutier2004}
    A.~Lieutier,
    \newblock Any open bounded subset of $\R^n$ has the same homotopy type as its medial axis,
    \newblock \emph{Comput.-Aided Design} \textbf{36} (2004), no.~11, 1029--1046.
    \newblock \doi{10.1016/j.cad.2004.01.011}

    \bibitem{Martini2004}
    H.~Martini and K.~J. Swanepoel,
    \newblock The geometry of Minkowski spaces---a survey. Part II,
    \newblock \emph{Expo. Math.} \textbf{22} (2004), no.~2, 93--144.
    \newblock \doi{10.1016/S0723-0869(04)80009-4}

    \bibitem{SiddiqiPizer2008}
    K.~Siddiqi and S.~M. Pizer (eds.),
    \newblock \emph{Medial Representations: Mathematics, Algorithms and Applications},
    \newblock Computational Imaging and Vision, vol.~37, Springer, Dordrecht, 2008.
    \newblock \doi{10.1007/978-1-4020-8658-8}

\end{thebibliography}
\end{document}